\documentclass[12pt]{article}

\usepackage[a4paper,margin=1in]{geometry}
\usepackage{amsmath,amssymb,amsthm,amsfonts,mathtools,mathrsfs,bm}
\usepackage{booktabs,array,multirow}
\usepackage{enumitem}
\usepackage{hyperref}
\usepackage{tikz}
\usepackage{algorithm}
\usepackage{algpseudocode}
\usepackage{caption}
\usepackage{float}
\usepackage{graphicx}
\renewenvironment{proof}{{\bf \noindent Proof.}}{\qed}
\everymath{\displaystyle}

\usetikzlibrary{arrows.meta,positioning,calc,shapes.geometric,fit,backgrounds}

\hypersetup{
	colorlinks=true,
	linkcolor=blue,
	citecolor=blue,
	urlcolor=blue
}

\newtheorem{theorem}{Theorem}[section]
\newtheorem{lemma}[theorem]{Lemma}
\newtheorem{proposition}[theorem]{Proposition}
\newtheorem{corollary}[theorem]{Corollary}

\newtheorem{openproblem}[theorem]{Open Problem}
\theoremstyle{definition}
\newtheorem{definition}[theorem]{Definition}
\newtheorem{example}[theorem]{Example}
\theoremstyle{remark}

\DeclareMathOperator{\reg}{reg}
\DeclareMathOperator{\pd}{pd}
\DeclareMathOperator{\comp}{comp}
\DeclareMathOperator{\Tor}{Tor}
\DeclareMathOperator{\Hilb}{Hilb}

\newcommand{\kk}{\mathbb{K}}
\newcommand{\ZZ}{\mathbb{Z}}
\newcommand{\EE}{\mathcal{E}}
\newcommand{\MM}{\mathcal{M}}
\newcommand{\CC}{\mathcal{C}}

\newcommand{\ind}{\mathrm{Ind}}
\newcommand{\bk}[1]{\mathbf{#1}}

\newcommand{\wt}{\widetilde}

\title{Graded Betti numbers of generalized split--join graphs and applications}

\author{Bilal Ahmad Rather\\[2mm]
	\small Department of Mathematics, Samarkand International University of Technology,\\
	\small Samarkand 140100, Uzbekistan\\
	\texttt{\href{mailto:bilalahmadrr@gmail.com}{bilalahmadrr@gmail.com}}
}

\date{}

\begin{document}
	
	\maketitle
	
	\begin{abstract}
		We determine the full graded Betti tables of graph families that subsume several classes studied recently in the literature, namely the generalized multiple complete split-like graphs and the generalized clique-star graphs with arbitrary clique block sizes. The method combines Hochster's formula with a precise decomposition of the associated independence complexes into disjoint unions of simplices and iterated joins of discrete complexes. This reduces every graded Betti number to an explicit coefficient extraction formula and yields closed expressions for the linear strand, higher strands, Hilbert series, regularity, projective dimension, and extremal Betti numbers. In particular, we prove a sharp criterion for $2$-linear resolution and identify the regularity corner in terms of the number of nontrivial clique blocks. As applications, we recover and extend earlier results on equal-block split-like graphs, obtain complete formulas for pineapple graphs, and derive consequences for power graphs of cyclic groups, elementary abelian groups, and prime-power dihedral groups. 
	\end{abstract}
	
	\noindent \textbf{2020 Mathematics Subject Classification.} Primary 13D02; Secondary 13F55, 05C25, 05E45, 20D60.
	
	\noindent \textbf{Keywords.} Edge ideals; graded Betti numbers; extremal Betti numbers; power graphs; split--join graphs; regularity.
	
	\section{Introduction}\label{sec:intro}
	
	The interaction between combinatorics and commutative algebra through edge ideals has become one of the most active themes in modern algebraic combinatorics. To a finite simple graph $G$ with vertex set $V(G)=\{x_1,\dots,x_N\}$, we associates the squarefree quadratic monomial ideal
	$$
	I(G)=(x_ix_j:{x_i,x_j}\in E(G))\subseteq R=\kk[x_1,\dots,x_N],
	$$
	and the quotient ring $R/I(G)$ records in algebraic form many structural features of the graph. The graded Betti numbers
	$$
	\beta_{i,j}(G):=\dim_{\kk}\Tor_i^R(\kk,R/I(G))_j
	$$
	describe the minimal graded free resolution of $R/I(G)$, and hence control several fundamental invariants such as the Castelnuovo--Mumford regularity and the projective dimension. Classical references for this viewpoint include Villarreal, Jacques, Herzog and Hibi, and the works surrounding Hochster's formula and Fröberg's theorem \cite{Villarreal1995,Hochster1977,Jacques2004,HerzogHibi2010,Froberg1990}. More recent developments emphasize exact Betti tables for structured graph classes, regularity bounds, extremal Betti numbers, and computational consequences \cite{BayerCharalambousPopescu1999,CorsoNagel2009,HaVanTuyl2008,FernandezRamosGimenez2014,Woodroofe2014,Froberg2022,Froberg2023}.
	
	From the graph-theoretic side, split graphs and graphs assembled by graph joins are especially attractive, since they sit between rigid and flexible behavior. They are simple enough to admit explicit combinatorial descriptions, yet rich enough to model algebraic graphs arising from groups and rings. In the recent paper \cite{Rather2024}, initial Betti numbers of multiple complete split-like graphs, clique-star graphs, and some equal-block generalizations were determined, together with last-column extremal Betti numbers and projective dimensions. The same paper showed that these graph classes occur naturally in power graphs and commuting graphs of finite groups. However, the higher strands of the Betti table, the regularity, and the Hilbert series were left open there. This omission is substantial, knowing only the linear strand does not determine the full homological behavior, does not locate the regularity corner, and does not answer when the edge ideal has a $2$-linear resolution.
	
	A second motivation comes from algebraic graphs attached to finite groups. Power graphs and commuting graphs have received sustained attention in both graph theory and algebra, see the surveys and foundational papers \cite{AbawajyKelarevChowdhury2013,ChakrabartyGhoshSen2009,KelarevQuinn2002,KumarSelvaganeshCameronChelvam2021}. In \cite{RatherWang2026}, homological invariants of edge ideals of power graphs of cyclic groups were investigated. That article established exact results for prime powers, for certain semiprime cases, and for cyclic groups with order equal to the product of three distinct primes. It also obtained inequalities for general values of $n$. Yet even there, the complete graded Betti table remained inaccessible in general, and the computations depended strongly on the special shape of the underlying power graph. Thus two parallel gaps were visible in the literature. First, the absence of full Betti tables for the graph families underlying many group graphs, and second, the lack of a structural mechanism explaining why apparently different algebraic graphs exhibit similar homological patterns.
	
	The present paper fills both gaps. We introduce two heterogeneous graph families
	$$
	\MM(a;\bk{b})=\overline{K}_a * \bigsqcup_{r=1}^m K_{b_r}
	\qquad\text{and}\qquad
	\CC(a;\bk{b})=K_a * \bigsqcup_{r=1}^m K_{b_r},
	$$
	where $a\geq 1$, $\bk{b}=(b_1,\dots,b_m)$ with each $b_r\geq 1$, and $*$ denotes graph join (joining each vertex of first graph to every vertex of second graph). When all $b_r$ are equal, these recover the multiple complete split-like graphs and clique-star graphs studied in \cite{Rather2024}. The main point is that the independence complexes of these families admit a transparent decomposition into a disjoint union of a simplex or a discrete complex with an iterated join of discrete complexes. This decomposition is sufficiently precise to make Hochster's formula completely explicit. In particular, every nonzero graded Betti number can be expressed either by a closed summation formula or, more compactly, by a coefficient extraction from certain elementary symmetric polynomials built from one-variable block polynomials. The resulting formulas simultaneously recover all previously known initial Betti numbers and produce every higher strand that was previously unavailable.
	
	The significance of these formulas is both conceptual and computational. Conceptually, they show that the entire Betti table is governed by two pieces of data only, the size $a$ of the distinguished part and the multiset of block sizes $\{b_1,\dots,b_m\}$. This immediately yields sharp formulas for regularity, projective dimension, Hilbert series, and extremal Betti numbers. It also gives a transparent criterion for $2$-linear resolution, as the edge ideal is $2$-linear precisely when there is at most one block of size at least $2$. Computationally, the formulas bypass repeated Gröbner or free-resolution computations in software systems such as Macaulay2 \cite{GraysonStillman2002}, instead, they reduce the problem to coefficient extraction from short generating functions. This leads to a dynamic programming algorithm with polynomial complexity in the total number of vertices and clique blocks.
	
	The real and technical significance of the problem is broader than the graph families themselves. Power graphs and commuting graphs encode algebraic generation and commutation, respectively, and therefore serve as combinatorial shadows of finite groups. Exact homological invariants of their edge ideals provide a refined measure of how these algebraic relations interact. For instance, the number and sizes of maximal cyclic or commuting pieces determine not only the shape of the graph but also the position of extremal Betti numbers. Such formulas are useful in symbolic computation, in the classification of graphs with low regularity, and in the systematic search for graph classes with prescribed homological behavior. They also offer a clean route from group-theoretic structure theorems to explicit free resolutions.
	
	Our main results may be summarized informally as follows. First, we determine the full graded Betti numbers of $\MM(a;\bk{b})$ and $\CC(a;\bk{b})$ for arbitrary block sizes. Second, we compute the Hilbert series of the corresponding Stanley--Reisner rings. Third, we show that the regularity of $R/I(G)$ equals $\max\{1,\nu\}$, where $\nu$ is the number of blocks with size at least $2$, and hence $\reg I(G)=1+\max\{1,\nu\}$. Fourth, we identify both the regularity corner and the last-column extremal corner, thereby determining the projective dimension. Finally, we apply the theory to pineapple graphs, power graphs of cyclic prime-power groups, power graphs of elementary abelian groups, and prime-power dihedral groups, recovering known results from \cite{Rather2024,RatherWang2026} and extending them substantially.
	
	A further goal of the paper is methodological. The formulas are not presented as isolated identities but as consequences of a unifying topological mechanism. The independence complex of our graph families is a disjoint union in which one component is an iterated join of discrete complexes. Since the reduced homology of such joins is easy to control, Hochster's formula becomes tractable in every bidegree. This perspective clarifies why equal-block and group-graph examples behave so regularly and why the top nonzero strand is governed by the number of nontrivial blocks.
	
	\medskip
	
	The paper is organized as: Section~\ref{sec:prelim} collects notation, recalls Hochster's formula, Fröberg's criterion, and the homology of joins, and isolates the precise gap left by the existing literature. Section~\ref{sec:decomp} develops the structural decomposition of the relevant independence complexes and derives explicit Hilbert series formulas. Section~\ref{sec:split} contains the first main family of new results, exact graded Betti numbers for generalized split--join graphs $\MM(a;\bk{b})$, including higher strands and equal-block specializations. Section~\ref{sec:clique} treats generalized clique-star graphs $\CC(a;\bk{b})$ in the same spirit and shows that the higher strands coincide with those of the split case while the linear strand changes in a controlled way. Section~\ref{sec:invariants} extracts regularity, $2$-linear criteria, extremal Betti numbers, projective dimension, and monotonicity consequences from the explicit Betti formulas. Section~\ref{sec:groups} applies the general theory to algebraic graphs coming from groups, including pineapple graphs, cyclic prime-power groups, elementary abelian groups, and prime-power dihedral groups. The paper ends with Section~\ref{sec:conclusion}, where we summarize the contributions, discuss limitations, and point to further problems.
	
	\section{Preliminaries and notation}\label{sec:prelim}
	
	Throughout the paper, $\kk$ denotes a field, all graphs are finite and simple, and for a graph $G$ we write $V(G)$ and $E(G)$ for its vertex and edge sets. For $W\subseteq V(G)$, the induced subgraph on $W$ is denoted by $G[W]$. We write $\overline{K}_a$ for the independent graph on $a$ vertices and $K_a$ for the complete graph on $a$ vertices. The symbol $\sqcup$ denotes disjoint union of graphs.

	\begin{definition}
		Let $G$ be a graph on $N$ vertices and let $R=\kk[x_1,\dots,x_N]$ be the polynomial ring indexed by the vertices of $G$. The edge ideal of $G$ is
		$$
		I(G)=(x_ix_j:{x_i,x_j}\in E(G))\subseteq R.
		$$
		We write
		$$
		\beta_{i,j}(G)=\beta_{i,j}(R/I(G))=\dim_{\kk}\Tor_i^R(\kk,R/I(G))_j.
		$$
		The Castelnuovo--Mumford regularity of $R/I(G)$ is defined by
		$$
		\reg(G):=\reg(R/I(G))=\max\{j-i:\beta_{i,j}(R/I(G))\neq 0\}.
		$$
		The projective dimension of $R/I(G)$ is defined by
		$$
		\pd(G):=\pd(R/I(G))=\max\{i:\beta_{i,j}(R/I(G))\neq 0\text{ for some }j\}.
		$$
		Equivalently, if
		$$
		0\to \bigoplus_j R(-j)^{\beta_{p,j}}\to \cdots \to \bigoplus_j R(-j)^{\beta_{1,j}}\to R\to R/I(G)\to 0
		$$
		is the minimal graded free resolution of $R/I(G)$, then $\reg(G)$ is the largest value of $j-i$ for which $\beta_{i,j}\neq 0$, and $\pd(G)$ is the length $p$ of the minimal graded free resolution.
	\end{definition}

	\begin{definition}
		A simplicial complex $\Delta$ on a vertex set $V$ is a collection of subsets of $V$ such that if $F \in \Delta$ and $G \subseteq F$, then $G \in \Delta$. A facet of $\Delta$ is a face $F \in \Delta$ that is maximal under inclusion, meaning there is no face $H \in \Delta$ with $F \subsetneq H$.	A facet is a maximal face of $\Delta$ under inclusion.
		
		The independence complex of a graph $G$ is the simplicial complex
		$$
		\ind(G)=\{F\subseteq V(G):F\text{ is an independent set in }G\}.
		$$
		If $\Delta$ is a simplicial complex and $W\subseteq V(\Delta)$, then $\Delta_W$ denotes the induced subcomplex on $W$.
	\end{definition}
	
	\begin{definition}
		For integers $a\geq 1$, $m\geq 1$, and $\bk{b}=(b_1,\dots,b_m)$ with $b_r\geq 1$, we define
		$$
		\MM(a;\bk{b})=\overline{K}_a * \bigsqcup_{r=1}^m K_{b_r}
		\qquad\text{and}\qquad
		\CC(a;\bk{b})=K_a * \bigsqcup_{r=1}^m K_{b_r}.
		$$
		When $b_1=\cdots=b_m=b$, these become the equal-block families studied in \cite{Rather2024}. In that notation,
		$ \MM(a;(b,\dots,b))=MCS^a_{b,m},$ and $
		\CC(a;(b,\dots,b))=S^a_{b,m}. $
	\end{definition}
	
	For later use, we set
	$ B=\sum_{r=1}^m b_r,$  $N=a+B,$ $\nu=\#\{r: b_r\geq 2\},$ and $\sigma=\sum_{b_r\geq 2} b_r.$ 	We also define
	$$
	M_s(\bk{b})=\binom{B}{s}-\sum_{r=1}^m \binom{b_r}{s}.
	$$
	So, $M_s(\bk{b})$ counts $s$-subsets of the union of the clique blocks that meet at least two distinct blocks.

	\medskip The next theorem is the basic bridge between graded Betti numbers and simplicial topology.
	
	\begin{theorem}[Hochster, \cite{Hochster1977,HerzogHibi2010}]\label{thm:Hochster}
		Let $\Delta$ be a simplicial complex on the vertex set $[N]$. Then for every $i,j\geq 0$,
		$$
		\beta_{i,j}(\kk[\Delta])=\sum_{\substack{W\subseteq [N]\\ |W|=j}}
		\dim_{\kk}\wt H_{j-i-1}(\Delta_W;\kk).
		$$
		In particular, if $\Delta=\ind(G)$, then $\beta_{i,j}(G)$ is computed by the reduced homology of induced subcomplexes of $\ind(G)$.
	\end{theorem}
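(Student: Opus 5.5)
The statement is Hochster's formula. I would prove it with the standard multigraded argument: compute $\Tor$ via the Koszul complex and identify each squarefree multidegree piece with a simplicial cochain complex.

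First I would use the symmetry of $\Tor$. Since $\Tor_i^R(\kk,\kk[\Delta])\cong \Tor_i^R(\kk[\Delta],\kk)$, it can be computed as the homology of the Koszul complex $K_\bullet(x_1,\dots,x_N)\otimes_R \kk[\Delta]$. This complex is $\ZZ^N$-graded, so $\Tor_i$ splits into multigraded pieces indexed by $\mathbf{a}\in\ZZ_{\ge0}^N$. The degree-$j$ part is the sum of the pieces with $|\mathbf{a}|=j$.

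Second, I would show that only squarefree multidegrees $\mathbf{a}=\mathbf{1}_W$ can contribute. For $\mathbf{a}$ not squarefree, say $a_k\ge 2$, I would build a contracting homotopy from multiplication by $x_k$ and contraction against $e_k$. A cleaner alternative is to invoke the equivalent statement through the Taylor or the $\kk$-resolution, namely $\Tor_i(\kk[\Delta],\kk)_{\mathbf a}$ computed by the Koszul resolution of $\kk$. In the Koszul resolution of $\kk$, the basis elements $e_F$ have squarefree degrees $\mathbf{1}_F$. Hence the degree-$\mathbf{a}$ part of $K_i\otimes\kk[\Delta]$ is spanned by $e_F\otimes u$ with $F\subseteq \operatorname{supp}$ and $u$ a monomial of $\kk[\Delta]$ of degree $\mathbf a-\mathbf 1_F$.

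Third, I would fix $W$ and analyze the strand of multidegree $\mathbf{1}_W$. Its basis consists of the elements $e_F\otimes x^{W\setminus F}$, where $F\subseteq W$ and $W\setminus F\in\Delta$. Setting $G=W\setminus F$, the chain group in homological degree $i$ has basis indexed by faces $G$ of $\Delta_W$ with $|G|=|W|-i$, including the empty face when $i=|W|$. The Koszul differential sends $e_F\otimes x^G$ to $\sum_{k\in F}\pm e_{F\setminus k}\otimes x^{G\cup k}$. Terms with $G\cup k\notin\Delta$ vanish in $\kk[\Delta]$. So, up to signs, this is the augmented simplicial coboundary of $\Delta_W$, carrying $(|G|-1)$-cochains to $|G|$-cochains. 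Homology in degree $i$ is therefore $\wt H^{|W|-i-1}(\Delta_W;\kk)$. Since $\kk$ is a field, this has the same dimension as $\wt H_{|W|-i-1}(\Delta_W;\kk)$.

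Finally, I would sum over $|W|=j$ to obtain the displayed formula. For the independence complex, recall that $I(G)$ is the Stanley–Reisner ideal of $\ind(G)$: its minimal non-faces are exactly the edges. This gives $\kk[\ind(G)]=R/I(G)$, which is the last assertion.

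The main obstacle is the vanishing in non-squarefree degrees together with the sign bookkeeping that matches the Koszul differential with the simplicial coboundary. I would handle the signs with a fixed vertex order and the sign $(-1)^{\#\{l\in F: l<k\}}$ on both sides.
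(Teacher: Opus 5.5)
\textbf{Overall.} The paper gives no proof of this statement; it only cites Hochster and Herzog--Hibi. Your Koszul-complex argument is the standard proof found in those sources. The squarefree-strand computation and the identification $\kk[\ind(G)]=R/I(G)$ are fine. The formula genuinely needs $\Tor$ to vanish in non-squarefree multidegrees, and that step does not work as you wrote it.

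\textbf{The non-squarefree vanishing.} ``Multiplication by $x_k$ and contraction against $e_k$'' lowers homological degree. It is the $k$-th summand of the Koszul differential, not a homotopy. Your ``cleaner alternative'' only describes the strand $(K_\bullet\otimes\kk[\Delta])_{\mathbf a}$. The squarefree degrees of the $e_F$ impose nothing on the monomial factor, so no vanishing follows from that observation.

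The homotopy must go the other way. For $a_k\ge 2$, set $s(e_F\otimes m)=e_k\wedge e_F\otimes m/x_k$ if $k\notin F$, and $s=0$ if $k\in F$. When $k\notin F$, the $x_k$-exponent of $m$ is $a_k\ge 2$. So $m/x_k$ has the same support as $m$, and $s$ is well defined on $\kk[\Delta]$. Write $\iota_l$ for contraction against $e_l$. The relation $\iota_l(e_k\wedge-)+e_k\wedge\iota_l(-)=\delta_{lk}$ then gives $ds+sd=\mathrm{id}$ on the $\mathbf a$-strand, so that strand is acyclic.

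The classical null-homotopy of multiplication by $x_k$ is not enough on its own. For $a_k=2$, the chain map $x_k$ from the $(\mathbf a-\mathbf e_k)$-strand to the $\mathbf a$-strand is surjective on chains but need not be injective, so surjectivity on homology is not automatic.

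Alternatively, take the Taylor route, but resolve $\kk[\Delta]$ itself rather than $\kk$. The Taylor resolution $T_\bullet$ of the Stanley--Reisner ideal has basis elements in degrees that are lcm's of squarefree monomials, hence squarefree. So $T_\bullet\otimes\kk$, and therefore $\Tor_i(\kk[\Delta],\kk)$, is concentrated in squarefree degrees.

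\textbf{Signs.} This is a smaller correction. The Koszul sign is $(-1)^{\#\{l\in F:\,l<k\}}$. The coboundary of $\Delta_W$ carries $(-1)^{\#\{l\in G:\,l<k\}}$ with $G=W\setminus F$. These differ by $(-1)^{\#\{l\in W:\,l<k\}}$, so the same sign cannot be used ``on both sides''. Rescale the basis vector indexed by $G$ by $\prod_{k\in G}(-1)^{\#\{l\in W:\,l<k\}}$. This makes your correspondence an honest isomorphism of complexes. After that, the identification with $\wt H^{|W|-i-1}(\Delta_W;\kk)$ and the dimension count over the field $\kk$ go through as you describe.
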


	 \medskip The next criterion identifies the $2$-linear case and is the standard benchmark for edge ideals.
	
	\begin{theorem}[Fröberg, \cite{Froberg1990,Froberg2023}]\label{thm:Froberg}
		For a finite simple graph $G$, the edge ideal $I(G)$ has a $2$-linear resolution if and only if the complement graph $\overline{G}$ is chordal.
	\end{theorem}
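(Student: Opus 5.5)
We would prove Theorem~\ref{thm:Froberg} by translating it, via Hochster's formula (Theorem~\ref{thm:Hochster}), into a statement about the topology of clique complexes.

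\emph{Reduction to topology.} Put $H=\overline{G}$. A set $F\subseteq V(G)$ is independent in $G$ exactly when it is a clique of $H$. Hence $\ind(G)$ is the clique complex $\Delta(H)$. Moreover, for $W\subseteq V$, the induced subcomplex $\ind(G)_W$ is the clique complex $\Delta(H[W])$. Saying that $I(G)$ has a $2$-linear resolution means that $\beta_{i,j}(R/I(G))=0$ whenever $i\ge 1$ and $j\neq i+1$. By Theorem~\ref{thm:Hochster}, $\beta_{i,j}$ is a sum of terms $\dim_{\kk}\wt H_{j-i-1}(\Delta(H[W]);\kk)$ over $|W|=j$. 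Every $j-i-1\ge 1$ actually occurs: given $W$ with $|W|=j$ and $k\in\{1,\dots,j-2\}$, take $i=j-1-k\ge 1$. Therefore linearity is equivalent to the following condition:
\begin{center}
$\wt H_k(\Delta(H[W]);\kk)=0$ for all $W\subseteq V$ and all $k\ge 1$.
\end{center}
Here $j-i-1=-1$ forces $i=j=0$, and $j-i-1=0$ is exactly the linear strand.

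\emph{Chordal implies linear.} Induced subgraphs of chordal graphs are chordal, so it suffices to show that the clique complex of a chordal graph $H$ has no reduced homology in degrees $\ge 1$. We argue by induction on $|V(H)|$, using Dirac's theorem that a chordal graph has a simplicial vertex $v$, i.e.\ $N_H(v)$ is a clique. Write $\Delta(H)=\Delta(H-v)\cup \overline{\mathrm{st}}(v)$. The closed star of $v$ is the full simplex on $\{v\}\cup N_H(v)$. The intersection $\Delta(H-v)\cap \overline{\mathrm{st}}(v)$ is the full simplex on $N_H(v)$.
\begin{itemize}[label={}]
\item If $N_H(v)\neq\emptyset$, this intersection is contractible. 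Gluing a contractible complex along a contractible subcomplex preserves homotopy type, so $\Delta(H)\simeq\Delta(H-v)$.
\item If $N_H(v)=\emptyset$, then $\Delta(H)$ is $\Delta(H-v)$ with one isolated point added. This changes only $\wt H_0$.
\end{itemize}
In either case the induction hypothesis gives $\wt H_k(\Delta(H))=0$ for $k\ge1$. Equivalently, each connected component of $\Delta(H)$ is contractible.

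\emph{Linear implies chordal.} Suppose $H$ is not chordal, and choose an induced cycle $C$ of length $n\ge 4$ on a vertex set $W$. Then $H[W]=C_n$ has no triangles, so $\Delta(H[W])$ is the $1$-dimensional cycle itself, a triangulated circle. Thus $\wt H_1(\Delta(H[W]);\kk)\cong\kk\neq 0$ over every field. Hochster's formula with $j=n$ and $i=n-2\ge 2$ then gives $\beta_{n-2,n}(G)\neq0$, and $n-(n-2)=2\neq1$ contradicts linearity.

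The only nontrivial ingredient is the existence of a simplicial vertex in a chordal graph, together with the gluing argument. This is the step we expect to need the most care, particularly the case of an empty neighbourhood and the degenerate case where $G$ has no edges, in which $I(G)=0$. Both cases are handled by the observation above that adding an isolated vertex only affects $\wt H_0$.
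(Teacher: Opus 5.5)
The paper gives no proof of this statement. Fr\"oberg's theorem is quoted as background and is never used later: Corollary~\ref{cor:2linear} gets its $2$-linear criterion from Theorem~\ref{thm:regularity} together with the standard equivalence with $\reg(R/I)=1$. So there is no argument in the paper to compare with. Your proof is correct, and it is the standard topological proof via Hochster's formula.

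There are two small points to tighten in the reduction step, where you show that linearity is equivalent to $\wt H_k(\Delta(\overline{G}[W]);\kk)=0$ for all $W$ and all $k\ge 1$.
\begin{itemize}
\item Your choice $i=j-1-k\ge 1$ only covers $k\le |W|-2$. You should state explicitly that vanishing is automatic for $k\ge |W|-1$: a complex on $|W|$ vertices that has a $(|W|-1)$-face is a full simplex.
\item In your remark on degree $-1$, the equation $j-i-1=-1$ alone does not force $i=j=0$. It is a \emph{nonzero} term in that degree that forces $W=\emptyset$, and hence $i=j=0$.
\end{itemize}

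The remaining steps are correct as written.
\begin{itemize}
\item In the simplicial-vertex induction, the closed star of $v$ is the full simplex on $\{v\}\cup N(v)$, and it meets $\Delta(H-v)$ in the simplex on $N(v)$. This even shows that every component of the clique complex of a chordal graph is contractible, so the criterion does not depend on $\kk$.
\item An induced $C_n$ with $n\ge 4$ gives $\beta_{n-2,n}(G)\neq 0$ over every field.
\end{itemize}

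Compared with the paper's bare citation, your route is self-contained within the paper's own toolkit: it uses only Theorem~\ref{thm:Hochster} plus Dirac's simplicial-vertex lemma. It also fits the paper's computations, giving an independent check of Corollary~\ref{cor:2linear}.
\begin{itemize}
\item For $G=\MM(a;\bk{b})$ one has $\overline{G}=K_a\sqcup K_{b_1,\dots,b_m}$, and for $G=\CC(a;\bk{b})$ one has $\overline{G}=\overline{K}_a\sqcup K_{b_1,\dots,b_m}$, where $K_{b_1,\dots,b_m}$ is the complete multipartite graph.
\item In either case, the only induced cycles of length $\ge 4$ are $4$-cycles using two vertices from each of two blocks of size $\ge 2$.
\item Their number is $\sum_{r<s}\binom{b_r}{2}\binom{b_s}{2}=[z^4]\EE_2(z;\bk{b})$. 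This is exactly $\beta_{2,4}(G)$ as given by Theorems~\ref{thm:split-higher} and~\ref{thm:clique-higher}.
\item By Fr\"oberg's criterion, linearity therefore holds precisely when at most one block has size $\ge 2$, that is, when $\nu\le 1$.
\end{itemize}
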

	
	 \medskip The homology of joins turns the combinatorics of clique blocks into explicit higher strands.
	
	\begin{theorem}[Homology of joins, \cite{BrunsHerzog1998,HerzogHibi2010}]\label{thm:join-homology}
		Let $\Delta$ and $\Gamma$ be simplicial complexes on disjoint vertex sets. Then
		$$
		\wt H_q(\Delta * \Gamma;\kk)\cong \bigoplus_{p+\ell=q-1}\wt H_p(\Delta;\kk)\otimes_{\kk}\wt H_\ell(\Gamma;\kk).
		$$
	\end{theorem}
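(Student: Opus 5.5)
The plan is to prove the statement at the level of augmented chain complexes and then apply the algebraic Künneth theorem over the field $\kk$. For a simplicial complex $\Delta$, let $\wt C_\bullet(\Delta)$ be its augmented oriented chain complex over $\kk$. Here $\wt C_p(\Delta)$ has a basis given by the $p$-dimensional faces, with $p\ge -1$, and the empty face sits in degree $-1$. The reduced homology $\wt H_\bullet(\Delta;\kk)$ is the homology of this complex. The join $\Delta*\Gamma$ has as faces the disjoint unions $F\sqcup G$ with $F\in\Delta$ and $G\in\Gamma$, and $\dim(F\sqcup G)=\dim F+\dim G+1$. This dimension formula is exactly the source of the shift $p+\ell=q-1$ in the statement.

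\emph{Step 1: identify the chain complex of the join.} Shift degrees by one: set $D_n(\Delta)=\wt C_{n-1}(\Delta)$, so that a face with $k$ vertices lies in degree $k$. Fix a total order on $V(\Delta)\sqcup V(\Gamma)$ in which every vertex of $\Delta$ precedes every vertex of $\Gamma$. Define
$$
\Phi:\ D(\Delta)\otimes_\kk D(\Gamma)\longrightarrow D(\Delta*\Gamma),\qquad [F]\otimes[G]\mapsto [F\sqcup G].
$$
Since the faces of $\Delta*\Gamma$ are in bijection with pairs of faces, and cardinalities add, $\Phi$ is a degreewise isomorphism of graded vector spaces.

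\emph{Step 2: check that $\Phi$ is a chain map.} This is the point where care is needed. For ordered faces, deleting the vertices of $F\sqcup G$ one at a time gives
$$
\partial[F\sqcup G]=\partial[F]\sqcup[G]+(-1)^{|F|}[F]\sqcup\partial[G].
$$
This is precisely the Koszul sign rule for the differential of a tensor product of complexes. The boundary cases also have to be checked: when $F=\emptyset$ or $G=\emptyset$, the augmentation $\partial[v]=[\emptyset]$ must be compatible with the formula. With the sign convention above it is. I expect this sign and augmentation bookkeeping to be the main (if routine) obstacle. I would verify it on a single vertex of each factor before asserting the general formula.

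\emph{Step 3: apply Künneth and reindex.} Over a field, the Künneth theorem gives
$$
H_n\bigl(D(\Delta)\otimes D(\Gamma)\bigr)\cong\bigoplus_{s+t=n}H_s(D(\Delta))\otimes_\kk H_t(D(\Gamma)),
$$
and there is no Tor term because $\kk$ is a field. Now substitute $n=q+1$, $s=p+1$ and $t=\ell+1$. Then $H_s(D(\Delta))=\wt H_p(\Delta;\kk)$, and similarly for $\Gamma$, while $H_{q+1}(D(\Delta*\Gamma))=\wt H_q(\Delta*\Gamma;\kk)$. The condition $s+t=n$ becomes $p+\ell=q-1$, which yields the stated isomorphism.

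Finally, I would note the degenerate conventions. The complex $\{\emptyset\}$ has $\wt H_{-1}=\kk$, and joining with $\{\emptyset\}$ is the identity; the formula correctly reproduces this. This case is exactly what is needed later when some factor of an iterated join is empty, that is, when a block contributes no vertices to $W$.
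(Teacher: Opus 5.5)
Your proof is correct. Note that the paper does not prove this statement: it imports it from Bruns--Herzog and Herzog--Hibi and then applies it inductively in Lemma~\ref{lem:block-homology}. So yours is a self-contained argument where the paper has only a citation. Your route is the standard one behind those references. You identify the augmented chain complex of $\Delta*\Gamma$, shifted so that a face with $k$ vertices sits in degree $k$, with the tensor product of the correspondingly shifted complexes of $\Delta$ and $\Gamma$, and then apply K\"unneth over $\kk$. What this buys is an explicit explanation of the shift $p+\ell=q-1$, which the paper uses but never justifies.

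The step you flagged as delicate does go through. With $[F]$ in degree $|F|$, moving the boundary past the $|F|$ vertices of $F$ gives exactly the Koszul sign $(-1)^{|F|}$. The augmentation $\partial[v]=[\emptyset]$ is just the one-vertex case of the alternating-sum formula, so it needs no separate treatment. For example, $\partial[v,w]=[w]-[v]$ agrees with $d([v]\otimes[w])=[\emptyset]\otimes[w]-[v]\otimes[\emptyset]$. The field hypothesis is what removes the Tor term; over $\ZZ$ the formula would need a correction.

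Your closing remark about empty factors is harmless but slightly overstated relative to the paper. Proposition~\ref{prop:induced-models} forms the join only over the blocks actually met by $W$, so a factor $\{\emptyset\}$ never arises there. Your convention simply confirms that restricting $D_{b_1}*\cdots*D_{b_m}$ to $W$ factor by factor gives the same complex.
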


	The existing literature makes clear that the linear strand is only the beginning of the homological story. Equal-block formulas from \cite{Rather2024} show that certain split--join graphs are tractable in the first strand, but they do not reveal the higher syzygies, and hence they do not determine regularity or the full Betti table. Similarly, \cite{RatherWang2026} identifies interesting cyclic power-graph cases and proves sharp results in some special arithmetic regimes, but the method there remains tied to the specific subgroup pattern of the cyclic group under consideration. What is missing is a structural theorem covering arbitrary block sizes and all strands simultaneously. The purpose of the present paper is precisely to provide such a theorem and then to push it back to the algebraic graph classes that motivated the problem.
	
	\section{Homological decomposition and Hilbert series}\label{sec:decomp}
	
	In this section, we identify the exact simplicial structure behind the graph families $\MM(a;\bk{b})$ and $\CC(a;\bk{b})$. This is the key point on which all later formulas depend.
	
	Let $A$ denote the distinguished part of size $a$, and let $B_r$ be the vertex set of the $r$-th clique block $K_{b_r}$. Thus, we derive
	$$
	V(\MM(a;\bk{b}))=A\sqcup B_1\sqcup\cdots\sqcup B_m
	\qquad\text{and}\qquad
	V(\CC(a;\bk{b}))=A\sqcup B_1\sqcup\cdots\sqcup B_m.
	$$
	We also write $D_s$ for the discrete simplicial complex on $s$ vertices.
	
	 \medskip The next proposition shows that the independence complexes split into a very small number of topological pieces.
	
	\begin{proposition}\label{prop:decomp}
		The independence complexes of the two graph families satisfy
		$$
		\ind(\MM(a;\bk{b}))=\langle A\rangle \sqcup (D_{b_1} * \cdots * D_{b_m}),
		$$
		and
		$$
		\ind(\CC(a;\bk{b}))=D_a \sqcup (D_{b_1} * \cdots * D_{b_m}),
		$$
		where $\langle A\rangle$ denotes the simplex on the vertex set $A$.
	\end{proposition}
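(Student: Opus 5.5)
The plan is to compare the two sides face by face, using that $\ind(G)$ consists exactly of the independent sets of $G$. Start from the definition of the join: every vertex of the distinguished part $A$ is adjacent to every vertex of $B_1\sqcup\cdots\sqcup B_m$. Hence an independent set $F$ with $F\cap A\neq\emptyset$ cannot meet any $B_r$, so $F\subseteq A$. Symmetrically, if $F$ meets some $B_r$ then $F\cap A=\emptyset$. So every nonempty face lies entirely inside $A$ or entirely inside $B:=B_1\sqcup\cdots\sqcup B_m$. Since no face uses vertices from both parts, $\ind(G)$ is the disjoint union $\ind(G[A])\sqcup\ind(G[B])$, and there are no edges of the complex between the two parts.

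Next, identify each piece. For $G[A]$: in $\MM(a;\bk{b})$ this is $\overline{K}_a$, so every subset of $A$ is independent and $\ind(G[A])=\langle A\rangle$. In $\CC(a;\bk{b})$ it is $K_a$, so the independent sets are $\emptyset$ and the singletons, giving $\ind(G[A])=D_a$. For $G[B]=\bigsqcup_r K_{b_r}$ (the same in both families): a set $F\subseteq B$ is independent iff $|F\cap B_r|\le 1$ for each $r$, since the blocks form cliques and different blocks are nonadjacent. This is exactly the face condition of the iterated join $D_{b_1}*\cdots*D_{b_m}$. Indeed, by definition the faces of a join $\Delta*\Gamma$ on disjoint vertex sets are the unions $\sigma\cup\tau$ with $\sigma\in\Delta$ and $\tau\in\Gamma$, and the faces of $D_{b_r}$ are the subsets of $B_r$ of size at most one. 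Induction on $m$ finishes this identification. Equivalently, one can use the general fact $\ind(G_1\sqcup G_2)=\ind(G_1)*\ind(G_2)$ together with $\ind(K_b)=D_b$.

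There is no genuine obstacle. The only care needed is bookkeeping: ``disjoint union'' here is the union of two complexes on disjoint vertex sets sharing only the empty face, and $A$, $B$ and the blocks $B_r$ must be treated as the vertex sets of the corresponding pieces so that both sides live on the same ground set $A\sqcup B$. With the conventions that $\langle A\rangle$ is the full simplex on $A$ and $D_s$ has the $s$ singletons as its only nonempty faces, the two face-set equalities above prove both identities at once.
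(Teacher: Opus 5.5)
Your proposal is correct and follows essentially the same argument as the paper: the join edges force every independent set to lie inside $A$ or inside $\bigcup_r B_r$; the $A$-part is $\langle A\rangle$ or $D_a$ according as $A$ induces $\overline{K}_a$ or $K_a$; and the block part consists of the sets meeting each $B_r$ in at most one vertex, which are exactly the faces of $D_{b_1}*\cdots*D_{b_m}$. Your explicit comparison with the face description of a join, together with the induction on $m$, only makes precise the step the paper states informally.
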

	
	\begin{proof}
		For $G=\MM(a;\bk{b})$, every vertex of $A$ is adjacent to every vertex of every block $B_r$, so no independent set can meet both $A$ and $\bigcup_r B_r$. Inside $A$ there are no edges, and hence every subset of $A$ is independent, thereby giving the simplex $\langle A\rangle$. Inside each block $B_r$ every pair of vertices is adjacent, so an independent set may contain at most one vertex from each block. Thus, across different blocks there are no edges, so one may choose independently from distinct blocks. This is exactly the iterated join $D_{b_1} * \cdots * D_{b_m}$. Hence the first formula follows.
		
		For $G=\CC(a;\bk{b})$, the same join restriction between $A$ and the blocks remains valid, but now the induced graph on $A$ is a clique, so independent sets in $A$ are only the empty set and the singletons. Therefore, the component contributed by $A$ is $D_a$, and the second formula follows.
	\end{proof}
	
	 \medskip The next lemma computes the entire reduced homology of the block-join component.
	
	\begin{lemma}\label{lem:block-homology}
		Let $\bk{w}=(w_1,\dots,w_t)$ with each $w_i\geq 1$, and let
		$ J(\bk{w})=D_{w_1} * \cdots * D_{w_t}. $
		Then the following hold.
		
		\begin{enumerate}[label=\textup{(\roman*)},noitemsep]
			\item If some $w_i=1$, then $J(\bk{w})$ is a cone, and hence contractible.
			\item If $w_i\geq 2$ for every $i$, then
			$$
			\wt H_q(J(\bk{w});\kk)\cong
			\begin{cases}
				\kk^{\prod_{i=1}^t (w_i-1)}, & q=t-1,\\
				0, & q\neq t-1.
			\end{cases}
			$$
		\end{enumerate}
	\end{lemma}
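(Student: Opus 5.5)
For part (i), suppose $w_i=1$. Then $D_{w_i}$ is a single vertex $v$, and $J(\bk{w})=\{v\}*J'$, where $J'$ is the join of the remaining factors; if $t=1$, take $J'$ to be the complex $\{\emptyset\}$. This is the cone over $J'$ with apex $v$. Every face $F$ of $J'$ gives a face $F\cup\{v\}$, so the straight-line homotopy to $v$ contracts the geometric realization. Hence $\wt H_q(J(\bk{w});\kk)=0$ for all $q$.

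For part (ii), I would argue by induction on $t$ using Theorem~\ref{thm:join-homology}.

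\emph{Base case $t=1$.} The complex $D_{w_1}$ consists of $w_1$ points. Its reduced homology is concentrated in degree $0$, where it is $\kk^{w_1-1}$. This matches the claimed formula with $t-1=0$.

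\emph{Inductive step.} Write $J(\bk{w})=J(w_1,\dots,w_{t-1})*D_{w_t}$. By the inductive hypothesis, the first factor has reduced homology only in degree $t-2$, of dimension $\prod_{i<t}(w_i-1)$. The second factor has reduced homology only in degree $0$, of dimension $w_t-1$. Theorem~\ref{thm:join-homology} gives
$$\wt H_q(J(\bk{w}))\cong\bigoplus_{p+\ell=q-1}\wt H_p(J(w_1,\dots,w_{t-1}))\otimes\wt H_\ell(D_{w_t}).$$
The only surviving summand has $p=t-2$ and $\ell=0$, which forces $q=t-1$. Its dimension is the product $\prod_{i=1}^{t}(w_i-1)$, as claimed. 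The hypothesis $w_i\ge 2$ is used only to guarantee that these dimensions are nonzero; the computation itself is valid regardless.

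\emph{Caveats.} Two points need a little care. First, the join homology formula over a field with reduced homology already handles the degree-$(-1)$ conventions, so no separate Künneth correction is needed. Second, part (i) also follows from the same formula, since $\wt H_*(D_1)=0$ kills every summand. I expect no real obstacle here: the argument is a direct iteration of the join formula.
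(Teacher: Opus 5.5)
Your proposal is correct and follows essentially the same route as the paper: part (i) via the cone over a single vertex, and part (ii) by iterating Theorem~\ref{thm:join-homology} from $\wt H_0(D_{w};\kk)\cong\kk^{w-1}$, so that only degree $t-1$ survives, with rank $\prod_{i}(w_i-1)$. Your explicit induction, including the $t=1$ case via $J'=\{\emptyset\}$, just writes out what the paper compresses into ``applying the join formula inductively.''
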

	
	\begin{proof}
		If some $w_i=1$, then $D_{w_i}$ is a point, and the join with a point is a cone. This proves (i).
		 For (ii), each $D_{w_i}$ has reduced homology concentrated in degree $0$, with
		$$
		\wt H_0(D_{w_i};\kk)\cong \kk^{w_i-1}.
		$$
		Applying Theorem~\ref{thm:join-homology} inductively, the only nonzero reduced homology of the join occurs in degree $t-1$, and the rank multiplies at each step. Hence, we have
		$$
		\wt H_{t-1}(J(\bk{w});\kk)\cong \bigotimes_{i=1}^t \wt H_0(D_{w_i};\kk)\cong \kk^{\prod_{i=1}^t (w_i-1)},
		$$
		while all other reduced homology groups vanish.
	\end{proof}
	
	 \medskip The next proposition reduces every induced subcomplex to one of a few explicit topological models.
	
	\begin{proposition}\label{prop:induced-models}
		Let $W\subseteq A\sqcup B_1\sqcup\cdots\sqcup B_m$, and let
		$ u=|W\cap A|, $ $ 	w_r=|W\cap B_r|, $ and $t=\#\{r:w_r>0\}. $ 	Then the induced subcomplexes are described as follows.
		
		\begin{enumerate}[label=\textup{(\roman*)},noitemsep]
			\item For $G=\MM(a;\bk{b})$, we have
			$$
			\ind(G)_W \cong
			\begin{cases}
				\langle W\cap A\rangle, & t=0,\\
				D_{w_r}, & u=0,\ t=1,\\
				J(w_{r_1},\dots,w_{r_t}), & u=0,\ t\geq 2,\\
				\langle W\cap A\rangle \sqcup D_{w_r}, & u>0,\ t=1,\\
				\langle W\cap A\rangle \sqcup J(w_{r_1},\dots,w_{r_t}), & u>0,\ t\geq 2.
			\end{cases}
			$$
			
			\item For $G=\CC(a;\bk{b})$, we have
			$$
			\ind(G)_W \cong
			\begin{cases}
				D_u, & t=0,\\
				D_{w_r}, & u=0,\ t=1,\\
				J(w_{r_1},\dots,w_{r_t}), & u=0,\ t\geq 2,\\
				D_u\sqcup D_{w_r}, & u>0,\ t=1,\\
				D_u\sqcup J(w_{r_1},\dots,w_{r_t}), & u>0,\ t\geq 2.
			\end{cases}
			$$
		\end{enumerate}
	\end{proposition}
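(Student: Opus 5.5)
The approach is to restrict the global decomposition of Proposition~\ref{prop:decomp} to $W$, using the fact that taking induced subcomplexes commutes with disjoint unions and with joins on disjoint vertex sets. First, I record the general identity: if $\Delta=\Delta_1\sqcup\Delta_2$ with $\Delta_1,\Delta_2$ on disjoint vertex sets $V_1,V_2$, then $\Delta_W=(\Delta_1)_{W\cap V_1}\sqcup(\Delta_2)_{W\cap V_2}$. If $\Delta=\Gamma_1*\cdots*\Gamma_m$ on disjoint vertex sets $U_1,\dots,U_m$, then $\Delta_W=(\Gamma_1)_{W\cap U_1}*\cdots*(\Gamma_m)_{W\cap U_m}$. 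Both follow directly from the definitions: a face of a join is a union $F_1\cup\cdots\cup F_m$ with $F_r\in\Gamma_r$, and it lies in $W$ exactly when each $F_r\subseteq W\cap U_r$.

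Next I apply this with $V_1=A$ and $V_2=B_1\sqcup\cdots\sqcup B_m$. For $\MM(a;\bk{b})$ the $A$-part restricts to $\langle A\rangle_{W\cap A}=\langle W\cap A\rangle$. For $\CC(a;\bk{b})$ it restricts to $(D_a)_{W\cap A}=D_u$. Restricting a discrete complex simply gives the discrete complex on the remaining vertices. On the block side, $(D_{b_r})_{W\cap B_r}=D_{w_r}$. Hence the block component becomes $D_{w_1}*\cdots*D_{w_m}$, where $D_0=\{\emptyset\}$ denotes the void-free empty complex.

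Then I handle the empty factors. Since $\{\emptyset\}*\Gamma=\Gamma$, every factor with $w_r=0$ can be dropped. This leaves $J(w_{r_1},\dots,w_{r_t})$ over the blocks actually met by $W$: it equals $\{\emptyset\}$ when $t=0$, $D_{w_r}$ when $t=1$, and the iterated join $J$ when $t\ge2$. Similarly, the $A$-part is $\{\emptyset\}$ when $u=0$. Disjoint union with $\{\emptyset\}$ adds no vertices, so it changes nothing. Running through the five combinations of $u=0$ versus $u>0$ and $t=0$, $t=1$, $t\geq2$ then yields exactly the listed cases in (i) and (ii).

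In the case $t=0$, the formula $\langle W\cap A\rangle$ (respectively $D_u$) also covers $u=0$, where it gives the empty complex $\{\emptyset\}$. The only point needing care is this bookkeeping of empty restrictions, so the argument must treat $\{\emptyset\}$ consistently as the unit for both join and disjoint union. I expect this convention to be the main, though minor, obstacle, since everything else is a direct restriction of Proposition~\ref{prop:decomp}.
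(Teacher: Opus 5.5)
Your proposal is correct and takes the same route as the paper, which proves the statement by restricting the decomposition of Proposition~\ref{prop:decomp} to $W$. Your check that induced subcomplexes commute with disjoint unions and joins, and your convention that $\{\emptyset\}$ is a unit for both operations, spell out the bookkeeping the paper leaves implicit.
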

	
	\begin{proof}
		This is an immediate induced-subcomplex version of Proposition~\ref{prop:decomp}. One simply restricts the components there to the vertices in $W$. For the split case the $A$-part remains a simplex whenever $u>0$. For the clique-star case it remains a discrete complex on $u$ vertices.
	\end{proof}
	
	 \medskip The next theorem computes the Hilbert series explicitly before any Betti-number calculation is attempted.
	
	\begin{theorem}\label{thm:Hilbert}
		Let
		$ F_{\MM}(x)=(1+x)^a+\prod_{r=1}^m (1+b_r x)-1 $
		and
		$ F_{\CC}(x)=1+ax+\prod_{r=1}^m (1+b_r x)-1. $
		Then
		$$
		\Hilb_{\kk[\ind(\MM(a;\bk{b}))]}(t)=F_{\MM}\Big(\tfrac{t}{1-t}\Big)
		=\frac{1}{(1-t)^a}+\prod_{r=1}^m\Big(1+\frac{b_r t}{1-t}\Big)-1,
		$$
		and
		$$
		\Hilb_{\kk[\ind(\CC(a;\bk{b}))]}(t)=F_{\CC}\Big(\tfrac{t}{1-t}\Big)
		=1+\frac{at}{1-t}+\prod_{r=1}^m\Big(1+\frac{b_r t}{1-t}\Big)-1.
		$$
	\end{theorem}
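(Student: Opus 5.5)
The plan is to use the standard face-count formula for Stanley--Reisner rings. For a simplicial complex $\Delta$ with $f_{k-1}$ faces of cardinality $k$ (so $f_{-1}=1$ for the empty face),
$$
\Hilb_{\kk[\Delta]}(t)=\sum_{k\geq 0} f_{k-1}\Big(\tfrac{t}{1-t}\Big)^k .
$$
This holds because the monomials of $\kk[\Delta]$ are partitioned by their supports $F\in\Delta$. Those with support exactly $F$ have generating function $\big(\tfrac{t}{1-t}\big)^{|F|}$. Consequently $\Hilb_{\kk[\Delta]}(t)=F_\Delta\big(\tfrac{t}{1-t}\big)$, where $F_\Delta(x)=\sum_{F\in\Delta}x^{|F|}$ is the face generating polynomial. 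The whole task then reduces to computing $F_\Delta$ for the two complexes of Proposition~\ref{prop:decomp}.

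The second step is to record how face polynomials behave under the two operations that appear. For a disjoint union $\Delta\sqcup\Gamma$ on disjoint vertex sets, the faces are those of $\Delta$ together with those of $\Gamma$, but the empty face is shared. Hence $F_{\Delta\sqcup\Gamma}=F_\Delta+F_\Gamma-1$. For a join, faces are unions $F\cup G$ with $F\in\Delta$ and $G\in\Gamma$, and this decomposition is unique. Hence $F_{\Delta*\Gamma}=F_\Delta F_\Gamma$, and by induction the face polynomial of an iterated join is the product of the factors.

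The third step is to plug in the basic pieces. For the simplex, $F_{\langle A\rangle}(x)=\sum_k\binom{a}{k}x^k=(1+x)^a$. For a discrete complex, $F_{D_s}(x)=1+sx$. Therefore
$$
F_{\MM}(x)=(1+x)^a+\prod_{r=1}^m(1+b_rx)-1,
\qquad
F_{\CC}(x)=1+ax+\prod_{r=1}^m(1+b_rx)-1,
$$
which are exactly the polynomials in the statement. Finally, we substitute $x=t/(1-t)$ and use $1+\tfrac{t}{1-t}=\tfrac{1}{1-t}$ to obtain the displayed closed forms.

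There is no real obstacle in this argument. The only point needing care is the $-1$ correction in the disjoint-union formula, which comes from the shared empty face; the rest is routine. As a quick sanity check, when $m=1$ and $b_1=1$ the graph $\MM(a;(1))$ is the star $K_{1,a}$, and the formula gives $(1-t)^{-a}+\tfrac{t}{1-t}$, as expected.
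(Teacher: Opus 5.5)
Your proposal is correct and follows essentially the same route as the paper. You write $\Hilb_{\kk[\Delta]}(t)=\sum_{F\in\Delta}\big(\tfrac{t}{1-t}\big)^{|F|}$, read off the face polynomials from the decomposition in Proposition~\ref{prop:decomp} (with the $-1$ removing the doubly counted empty face), and substitute $x=t/(1-t)$; your rules $F_{\Delta\sqcup\Gamma}=F_\Delta+F_\Gamma-1$ and $F_{\Delta*\Gamma}=F_\Delta F_\Gamma$ are only a more formal packaging of the paper's direct count of independent sets.
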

	
	\begin{proof}
		For a simplicial complex $\Delta$, we have
		$$
		\Hilb_{\kk[\Delta]}(t)=\sum_{F\in\Delta}\Big(\frac{t}{1-t}\Big)^{|F|}.
		$$
		Thus it is enough to determine the face enumerators. By Proposition~\ref{prop:decomp}, independent sets in $\MM(a;\bk{b})$ are either arbitrary subsets of $A$ or choices of at most one vertex from each block, but not both simultaneously. This fact gives us
		$$
		F_{\MM}(x)=\sum_{i=0}^a \binom{a}{i}x^i+\prod_{r=1}^m(1+b_r x)-1
		=(1+x)^a+\prod_{r=1}^m(1+b_r x)-1.
		$$
		Similarly, in $\CC(a;\bk{b})$ the independent sets in $A$ are only the empty set and singletons, and we have
		$$
		F_{\CC}(x)=1+ax+\prod_{r=1}^m(1+b_r x)-1.
		$$
		Now with $x=\tfrac{t}{1-t}$, we obtain the required series.
	\end{proof}
	
	 \medskip The next corollary shows that the heterogeneous formulas immediately collapse to the equal-block cases, with $b_r=b$ for all $r$ in Theorem~\ref{thm:Hilbert}.
	
	\begin{corollary}\label{cor:Hilbert-equal}
		If $b_1=\cdots=b_m=b$, then
		$$
		\Hilb_{\kk[\ind(\MM(a;(b,\dots,b)))]}(t)=\frac{1}{(1-t)^a}+\Big(1+\frac{bt}{1-t}\Big)^m-1,
		$$
		and
		$$
		\Hilb_{\kk[\ind(\CC(a;(b,\dots,b)))]}(t)=1+\frac{at}{1-t}+\Big(1+\frac{bt}{1-t}\Big)^m-1.
		$$
	\end{corollary}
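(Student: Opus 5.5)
This is a direct specialization of Theorem~\ref{thm:Hilbert}, so I will substitute $b_1=\cdots=b_m=b$ into the two formulas there and simplify. No new homological input is needed; Proposition~\ref{prop:decomp} and the face-enumerator computation in the proof of Theorem~\ref{thm:Hilbert} already hold for arbitrary block sizes.

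First, with $b_r=b$ for every $r$, the block factor becomes
$$
\prod_{r=1}^m(1+b_r x)=(1+bx)^m.
$$
Substituting into the definitions gives
$$
F_{\MM}(x)=(1+x)^a+(1+bx)^m-1
\qquad\text{and}\qquad
F_{\CC}(x)=1+ax+(1+bx)^m-1.
$$
Second, I evaluate these at $x=\tfrac{t}{1-t}$ as in Theorem~\ref{thm:Hilbert}. The identity $1+\tfrac{t}{1-t}=\tfrac{1}{1-t}$ turns $(1+x)^a$ into $(1-t)^{-a}$, while the term $ax$ becomes $\tfrac{at}{1-t}$. The product becomes $\bigl(1+\tfrac{bt}{1-t}\bigr)^m$. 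This yields exactly the two displayed series.

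Both expressions are simply the right-hand sides of Theorem~\ref{thm:Hilbert} with the product collapsed to a power, so there is no genuine obstacle. The only point worth stating is that the $-1$ corrects the empty face, which is counted in both the $A$-part and the block-join part. This correction does not depend on equal block sizes, so it carries over unchanged. As an optional consistency check, the case $m=1$ gives $\tfrac{1}{(1-t)^a}+\tfrac{bt}{1-t}$, which matches the face count of a simplex on $a$ vertices disjoint from $b$ isolated points.
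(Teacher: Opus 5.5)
Your proposal is correct and follows the paper's route exactly: the corollary is just Theorem~\ref{thm:Hilbert} with $b_r=b$ for all $r$, so $\prod_{r=1}^m(1+b_r x)$ becomes $(1+bx)^m$, and then you substitute $x=\tfrac{t}{1-t}$. Your $m=1$ consistency check is a harmless extra and is not needed for the argument.
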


	\medskip
	\begin{figure}[H]
		\centering
		
		\begin{minipage}{0.49\textwidth}
			\centering
			\resizebox{\textwidth}{!}{
				\begin{tikzpicture}[every node/.style={circle,draw,inner sep=1.2pt,font=\scriptsize}]
					\node[fill=gray!15] (a1) at (0,0.5) {$a_1$};
					\node[fill=gray!15] (a2) at (0,-1.2) {$a_2$};
					
					```
					\node[fill=blue!10] (u1) at (3.2,2.5) {$u_1$};
					\node[fill=blue!10] (u2) at (4.2,2.4) {$u_2$};
					
					\node[fill=green!10] (v1) at (6.2,2.8) {$v_1$};
					\node[fill=green!10] (v2) at (7.0,2.5) {$v_2$};
					\node[fill=green!10] (v3) at (6.2,1.45) {$v_3$};
					
					\node[fill=orange!12] (w1) at (10.0,2.5) {$w_1$};
					\node[fill=orange!12] (w2) at (11.0,1.6) {$w_2$};
					\node[fill=orange!12] (w3) at (11.0,0.6) {$w_3$};
					\node[fill=orange!12] (w4) at (10.0,-0.1) {$w_4$};
					
					\foreach \x in {u1,u2,v1,v2,v3,w1,w2,w3,w4}{
						\draw[thin] (a1)--(\x);
						\draw[thin] (a2)--(\x);
					}
					
					\draw[thick] (u1)--(u2);
					\draw[thick] (v1)--(v2)--(v3)--(v1);
					\draw[thick] (w1)--(w2)--(w3)--(w4)--(w1);
					\draw[thick] (w1)--(w3);
					\draw[thick] (w2)--(w4);
					
					\node[draw=none,fill=none,font=\small] at (0,1.5) {$A=K_2$};
					\node[draw=none,fill=none,font=\small] at (3.2,3.1) {$B_1=K_2$};
					\node[draw=none,fill=none,font=\small] at (6.6,3.35) {$B_2=K_3$};
					\node[draw=none,fill=none,font=\small] at (10.5,3.1) {$B_3=K_4$};
					
				\end{tikzpicture}
			}
			
			\vspace{1.5mm}
			$\MM(2;(2,3,4))=\overline{K_2}*(K_2\sqcup K_3\sqcup K_4)$
		\end{minipage}
		\hfill
		\begin{minipage}{0.49\textwidth}
			\centering
			\resizebox{\textwidth}{!}{
				\begin{tikzpicture}[every node/.style={circle,draw,inner sep=1.2pt,font=\scriptsize}]
					\node[fill=gray!15] (a1) at (0,0.5) {$a_1$};
					\node[fill=gray!15] (a2) at (0,-1.2) {$a_2$};

					\node[fill=blue!10] (u1) at (3.2,2.5) {$u_1$};
					\node[fill=blue!10] (u2) at (4.2,2.4) {$u_2$};
					
					\node[fill=green!10] (v1) at (6.2,2.8) {$v_1$};
					\node[fill=green!10] (v2) at (7.0,2.5) {$v_2$};
					\node[fill=green!10] (v3) at (6.2,1.45) {$v_3$};
					
					\node[fill=orange!12] (w1) at (10.0,2.5) {$w_1$};
					\node[fill=orange!12] (w2) at (11.0,1.6) {$w_2$};
					\node[fill=orange!12] (w3) at (11.0,0.6) {$w_3$};
					\node[fill=orange!12] (w4) at (10.0,-0.1) {$w_4$};
					
					\draw[thick] (a1)--(a2);
					
					\foreach \x in {u1,u2,v1,v2,v3,w1,w2,w3,w4}{
						\draw[thin] (a1)--(\x);
						\draw[thin] (a2)--(\x);
					}
					
					\draw[thick] (u1)--(u2);
					\draw[thick] (v1)--(v2)--(v3)--(v1);
					\draw[thick] (w1)--(w2)--(w3)--(w4)--(w1);
					\draw[thick] (w1)--(w3);
					\draw[thick] (w2)--(w4);
					
					\node[draw=none,fill=none,font=\small] at (0,1.5) {$A=K_2$};
					\node[draw=none,fill=none,font=\small] at (3.2,3.1) {$B_1=K_2$};
					\node[draw=none,fill=none,font=\small] at (6.6,3.35) {$B_2=K_3$};
					\node[draw=none,fill=none,font=\small] at (10.5,3.1) {$B_3=K_4$}; 
					
				\end{tikzpicture}
			}
			
			\vspace{1.5mm}
			$\CC(2;(2,3,4))=K_2*(K_2\sqcup K_3\sqcup K_4)$
		\end{minipage}
		
		\caption{Graphs $\MM(2;(2,3,4))$ and $\CC(2;(2,3,4))$.}
		\label{fig:mm-cc-234-clean}
	\end{figure}
	
	\begin{example}\label{ex:hilbert-example}
		 For the graph $\MM(2;(2,3,4))$, the set $A$ is independent. Hence every subset of $A$ is an independent set, and this contributes $ 1+2x+x^2=(1+x)^2 $ to the face polynomial. On the other hand, from the three clique blocks one may choose at most one vertex from each block, so their contribution is
		$ (1+2x)(1+3x)(1+4x). $
		The empty set appears in both pieces, so it must be counted only once. Therefore, we have
		$$
		F_{\MM}(x)=(1+x)^2+(1+2x)(1+3x)(1+4x)-1=1+11x+27x^2+24x^3.
		$$
		With $x=\tfrac{t}{1-t}$, we obtain Hilbert series as
		$$
		\Hilb_{\kk[\ind(\MM(2;(2,3,4)))]}(t)
		= 1+11\frac{t}{1-t}+27\frac{t^2}{(1-t)^2}+24\frac{t^3}{(1-t)^3}=\frac{1+8t+8t^2+7t^3}{(1-t)^3}.
		$$
		With the same idea for $\CC(2;(2,3,4))$, we have 
		$$
		F_{\CC}(x)=1+11x+26x^2+24x^3.
		$$
		With $x=\tfrac{t}{1-t}$, we obtain 
		$$
		\Hilb_{\kk[\ind(\CC(2;(2,3,4)))]}(t)
		= \frac{1+8t+7t^2+8t^3}{(1-t)^3}.
		$$
		
		The difference between the two families is now transparent. In the split--join case $\MM(2;(2,3,4))$, the pair ${a_1,a_2}$ is an independent set, so an extra $x^2$ term appears in the face polynomial. In the clique--star case $\CC(2;(2,3,4))$, this pair is forbidden, and that single missing $2$-face changes the numerator of the Hilbert series. Figure \ref{fig:decomp-detailed} illustrates the face decomposition of both graphs.
	\end{example}
	
	\begin{figure}[H]
		\centering
		\begin{tikzpicture}[scale=1, every node/.style={font=\small}, >=Latex]
			\node[draw, rounded corners, minimum width=3.1cm, minimum height=1.15cm, fill=gray!12] (L1) at (-.1,-.2) {};
			\node at (L1.center) {$\langle A\rangle$};
			\node[below=0.1cm of L1] {$A=\{a_1,a_2\}$ gives a $1$-simplex};
			
			```
			\node[draw, rounded corners, minimum width=5.2cm, minimum height=1.15cm, fill=blue!8] (R1) at (7.2,-.2) {};
			\node at (R1.center) {$D_2 * D_3 * D_4$};
			\node[below=0.1cm of R1] {choose at most one vertex from each block};
			
			\draw[thick,-Latex] (1.7,0) -- (4.6,0);
			\node[above=0.18cm] at (3.15,0) {$\sqcup$};
			
			\node[above=0.85cm of $(L1)!0.5!(R1)$] {$\ind(\MM(2;(2,3,4)))=\langle A\rangle \sqcup (D_2 * D_3 * D_4)$};
			
			\node[draw, rounded corners, minimum width=3.1cm, minimum height=1.15cm, fill=gray!12] (L2) at (0,-3.2) {};
			\node at (L2.center) {$D_A$};
			\node[below=0.1cm of L2] {$A=\{a_1,a_2\}$ contributes two $0$-simplex};
			
			\node[draw, rounded corners, minimum width=5.2cm, minimum height=1.15cm, fill=green!10] (R2) at (7.2,-3.2) {};
			\node at (R2.center) {$D_2 * D_3 * D_4$};
			\node[below=0.1cm of R2] {the block contribution is unchanged};
			
			\draw[thick,-Latex] (1.7,-3.1) -- (4.6,-3.1);
			\node[above=0.18cm] at (3.15,-3.1) {$\sqcup$};
			
			\node[above=0.85cm of $(L2)!0.5!(R2)$] {$\ind(\CC(2;(2,3,4)))=D_A \sqcup (D_2 * D_3 * D_4)$};
			
			\begin{scope}[shift={(-0.8,0)}]
				\filldraw[black] (0.3,0.1) circle (1.1pt);
				\filldraw[black] (1.1,0.1) circle (1.1pt);
				\draw (0.3,0.1)--(1.1,0.1);
			\end{scope}
			
			\begin{scope}[shift={(-0.8,-3.1)}]
				\filldraw[black] (0.3,0.1) circle (1.1pt);
				\filldraw[black] (1.1,0.1) circle (1.1pt);
			\end{scope}
			```
			
		\end{tikzpicture}
		\caption{A more explicit picture of the two independence complexes.}
		\label{fig:decomp-detailed}
	\end{figure}

	The decomposition in Proposition~\ref{prop:decomp} is the point where the present paper departs from \cite{Rather2024,RatherWang2026}. Earlier work handled equal-block families and special group graphs largely case by case. Here the entire problem is reorganized around a topological decomposition valid for arbitrary block sizes, and Theorem~\ref{thm:Hilbert} already gives Hilbert series formulas that were not available in that generality.
	
	\section{Exact graded Betti numbers for generalized split--join graphs}\label{sec:split}
	
	We now turn to the family
	$ \MM(a;\bk{b})=\overline{K}_a * \bigsqcup_{r=1}^m K_{b_r}. $
	The equal-block case was treated only on the initial strand in \cite{Rather2024}. We derive the full graded Betti table.
	 For convenience, define the block polynomials
	$$
	P_r(z)=\sum_{q=2}^{b_r}(q-1)\binom{b_r}{q}z^q,
	\qquad 1\leq r\leq m,
	$$
	and for $t\geq 1$, let
	$$
	\EE_t(z;\bk{b})=e_t(P_1(z),\dots,P_m(z)),
	$$
	which is the $t$-th elementary symmetric polynomial in the polynomials $P_1(z),\dots,P_m(z)$.
	
	 \medskip The next theorem gives the complete linear strand of $\MM(a;\bk{b})$.
	
	\begin{theorem}\label{thm:split-linear}
		For $j\geq 2$,
		$$
		\beta_{j-1,j}(\MM(a;\bk{b}))=
		\sum_{r=1}^m (j-1)\binom{b_r}{j}
		+
		\sum_{u=1}^{\min{a,j-1}}
		\binom{a}{u}
		\sum_{r=1}^m (j-u)\binom{b_r}{j-u}
		+
		\sum_{u=1}^{\min{a,j-2}}
		\binom{a}{u}M_{j-u}(\bk{b}).
		$$
	\end{theorem}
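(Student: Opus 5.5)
The plan is to apply Hochster's formula (Theorem~\ref{thm:Hochster}) with $i=j-1$. This gives
$$
\beta_{j-1,j}(\MM(a;\bk{b}))=\sum_{|W|=j}\dim_{\kk}\wt H_0(\ind(G)_W;\kk).
$$
For a nonempty complex, $\dim\wt H_0$ equals the number of connected components minus one. So the whole computation reduces to counting connected components of the induced subcomplexes, and these are listed explicitly in Proposition~\ref{prop:induced-models}(i). I will write $u=|W\cap A|$, $w_r=|W\cap B_r|$ and $t=\#\{r:w_r>0\}$, with $u+\sum_r w_r=j\geq 2$, and run through the five cases of that proposition.

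\emph{Case $t=0$.} Here $\ind(G)_W=\langle W\cap A\rangle$ is a simplex, hence connected, and contributes $0$.

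\emph{Case $u=0$, $t=1$.} Here $W\subseteq B_r$ with $|W|=j$, and the complex is $D_j$. It contributes $j-1$. There are $\binom{b_r}{j}$ such $W$, which gives the first sum.

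\emph{Case $u=0$, $t\geq 2$.} The complex is $J(w_{r_1},\dots,w_{r_t})$, a join of at least two nonempty complexes. I claim it is connected, so it contributes $0$. This follows from Lemma~\ref{lem:block-homology}: either the join is a cone, or its reduced homology is concentrated in degree $t-1\geq 1$. Either way $\wt H_0=0$.

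\emph{Case $u>0$, $t=1$.} The complex is $\langle W\cap A\rangle\sqcup D_{j-u}$, which has $1+(j-u)$ components and so contributes $j-u$. The vertex set is chosen as $u$ vertices of $A$ and $j-u\geq 1$ vertices of a single block $B_r$, in $\binom{a}{u}\binom{b_r}{j-u}$ ways. Here $1\leq u\leq\min\{a,j-1\}$, and this gives the second sum.

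\emph{Case $u>0$, $t\geq 2$.} The complex is a simplex disjoint from a connected join, so it has exactly two components and contributes $1$. The remaining $j-u$ vertices must be a subset of $\bigcup_r B_r$ meeting at least two blocks. By definition there are exactly $M_{j-u}(\bk{b})$ such subsets. The condition $t\geq 2$ forces $j-u\geq 2$, so $1\leq u\leq\min\{a,j-2\}$. Note that $M_s(\bk{b})$ automatically vanishes for $s<2$, so the range does no harm. This gives the third sum.

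Adding the five contributions yields the stated formula. The one point needing care is the connectedness of the join component in the two $t\geq 2$ cases, which is what makes those contributions $0$ and $1$ respectively. I handle it through Lemma~\ref{lem:block-homology} as above. It can also be seen directly: any two vertices in different blocks span an edge, and any two vertices in the same block are joined through a vertex of another block. The rest is bookkeeping of the summation ranges, which should be checked against the small example $\MM(2;(2,3,4))$.
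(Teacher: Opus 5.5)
Your proposal is correct and follows essentially the same route as the paper: Hochster's formula in homological degree $0$, the case split from Proposition~\ref{prop:induced-models}(i), and the same contributions. These are $j-1$ when $W$ lies in one block, $j-u$ when $W$ meets $A$ and one block, $1$ when $W$ meets $A$ and at least two blocks, and $0$ otherwise, with connectedness of the block join being the only point that needs justification. One aside is slightly off, though harmlessly: $M_0(\bk{b})=1-m$ is not zero, so among $s<2$ only $M_1$ vanishes, but the range $u\le j-2$ already forces $s=j-u\ge 2$, and nothing in your argument depends on that remark.
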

	
	\begin{proof}
		For $G=\MM(a;\bk{b})$, Hochster's formula implies
		$$
		\beta_{j-1,j}(G)=\sum_{\substack{W\subseteq V(G)\\ |W|=j}}
		\dim_{\kk}\wt H_0(\ind(G)_W;\kk).
		$$
		We therefore classify all $j$-subsets $W$ for which $\ind(G)_W$ is disconnected, and consider the following cases.
		
		\smallskip
		
		\noindent\textbf{Case 1:} If $W\subseteq B_r$ for some fixed $r$, then $\ind(G)_W\cong D_j$, so we have
		$$
		\dim_{\kk}\wt H_0(\ind(G)_W;\kk)=j-1.
		$$
		There are $\binom{b_r}{j}$ such choices inside $B_r$, and hence the total contribution of this case is
		$$
		\sum_{r=1}^m (j-1)\binom{b_r}{j}.
		$$
		
		\smallskip
		
		\noindent\textbf{Case 2:} If $W$ meets $A$ and exactly one block $B_r$, then for $u=|W\cap A|$ with $1\leq u\leq j-1$, so we get $|W\cap B_r|=j-u$. By Proposition~\ref{prop:induced-models}, we have
		$$
		\ind(G)_W\cong \langle W\cap A\rangle \sqcup D_{j-u}.
		$$
		It has $(j-u)+1$ connected components, and hence we have
		$$
		\dim_{\kk}\wt H_0(\ind(G)_W;\kk)=j-u.
		$$
		For fixed $u$, the number of such subsets is $\binom{a}{u}\binom{b_r}{j-u}$, so the total contribution is
		$$
		\sum_{u=1}^{\min{a,j-1}}
		\binom{a}{u}
		\sum_{r=1}^m (j-u)\binom{b_r}{j-u}.
		$$
		
		\smallskip
		
		\noindent\textbf{Case 3:} If $W$ meets $A$ and at least two distinct blocks. Again For $u=|W\cap A|$, with $1\leq u\leq j-2$, and by Proposition~\ref{prop:induced-models}, we have
		$$
		\ind(G)_W\cong \langle W\cap A\rangle \sqcup J(\bk{w}),
		$$
		where $J(\bk{w})$ is connected, since it is the join of at least two nonempty complexes. Hence, $\ind(G)_W$ has exactly two connected components, so we obtain
		$$
		\dim_{\kk}\wt H_0(\ind(G)_W;\kk)=1.
		$$
		For fixed $u$, the number of ways to choose the remaining $j-u$ vertices in at least two blocks is precisely $M_{j-u}(\bk{b})$. SO, in this  case, the total  contribution is
		$$
		\sum_{u=1}^{\min{a,j-2}}
		\binom{a}{u}M_{j-u}(\bk{b}).
		$$
		
		\smallskip
		 No other subset contributes to $\beta_{j-1,j}(\MM(a;\bk{b}))$. Indeed, if $W\subseteq A$, then $\ind(G)_W$ is a simplex, if $W$ is contained in at least two blocks and avoids $A$, then $\ind(G)_W$ is connected, and contributes null. Now, summing the cases, we obtain the stated formula for $\beta_{j-1,j}(\MM(a;\bk{b})).$
	\end{proof}
	
	 \medskip The next theorem gives every higher strand in one formula.
	
	\begin{theorem}\label{thm:split-higher}
		Let $t\geq 2$. Then for every $j\geq 2t$,
		$$
		\beta_{j-t,j}(\MM(a;\bk{b}))
		= \sum_{u=0}^{\min{a,j-2t}}
		\binom{a}{u}[z^{j-u}]\EE_t(z;\bk{b}),
		$$
		where $[z^s]$ denotes coefficient extraction, or equivalently,
		$$
		\beta_{j-t,j}(\MM(a;\bk{b}))
		= \sum_{u=0}^{\min{a,j-2t}}
		\binom{a}{u}
		\sum_{\substack{1\leq r_1<\cdots<r_t\leq m\\
				q_1+\cdots+q_t=j-u\\
				2\leq q_i\leq b_{r_i}}}~
		\prod_{i=1}^t (q_i-1)\binom{b_{r_i}}{q_i}.
		$$
	\end{theorem}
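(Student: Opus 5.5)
The plan is to apply Hochster's formula (Theorem~\ref{thm:Hochster}) in homological degree $q=j-(j-t)-1=t-1\geq 1$. This gives
$$
\beta_{j-t,j}(\MM(a;\bk{b}))=\sum_{|W|=j}\dim_{\kk}\wt H_{t-1}(\ind(G)_W;\kk),
$$
and we then classify the $j$-subsets $W$ using the models of Proposition~\ref{prop:induced-models}. Write $u=|W\cap A|$ and $w_r=|W\cap B_r|$. Since $t-1\geq 1$, reduced homology in degree $t-1$ is additive over disjoint unions in positive degrees. Hence the simplex component $\langle W\cap A\rangle$ (when $u>0$) contributes nothing; it is also irrelevant when $u=0$. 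Likewise, a single block ($t'=1$ nonempty blocks) gives a discrete complex with no homology in degree $\geq 1$. So $\wt H_{t-1}(\ind(G)_W)\cong \wt H_{t-1}(J(w_{r_1},\dots,w_{r_{t'}}))$, where $t'$ is the number of blocks met by $W$, and $W\cap A$ is completely free.

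Next we apply Lemma~\ref{lem:block-homology}. If some $w_{r_i}=1$, the join is a cone and contributes zero. Otherwise its only homology is in degree $t'-1$, of rank $\prod_i(w_{r_i}-1)$. Therefore $W$ contributes exactly when $W$ meets exactly $t$ blocks $B_{r_1},\dots,B_{r_t}$ with each $q_i:=w_{r_i}\geq 2$, and the contribution is $\prod_{i=1}^t(q_i-1)$.

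We then count. Fix $u$, the set of $t$ blocks, and sizes $q_1+\cdots+q_t=j-u$ with $2\leq q_i\leq b_{r_i}$. The number of such $W$ is $\binom{a}{u}\prod_i\binom{b_{r_i}}{q_i}$. Summing the weighted count gives the second displayed formula. The range $0\leq u\leq\min\{a,j-2t\}$ comes from $j-u=\sum q_i\geq 2t$; terms outside this range vanish anyway. Finally, expanding $\EE_t(z;\bk{b})=\sum_{r_1<\cdots<r_t}\prod_i P_{r_i}(z)$ shows that $[z^{j-u}]\EE_t$ is exactly the inner sum, since $P_r$ has coefficient $(q-1)\binom{b_r}{q}$ on $z^q$ for $2\leq q\leq b_r$. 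This yields the first formula.

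The only delicate point is the reduction of $\wt H_{t-1}$ of a disjoint union to that of the join component. This needs $t-1\geq 1$; in degree $0$, components add extra rank, which is exactly the extra terms in Theorem~\ref{thm:split-linear}. It is therefore essential that $t\geq 2$. It also requires that the $u>0$ and $u=0$ cases be treated uniformly, which holds because a simplex is acyclic in positive degrees, and for $u=0$ the empty $A$-part simply disappears.
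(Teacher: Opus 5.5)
Your proposal is correct and follows the paper's proof essentially step for step: Hochster's formula in degree $t-1$, the induced-subcomplex models of Proposition~\ref{prop:induced-models}, the cone/product dichotomy of Lemma~\ref{lem:block-homology}, and the same weighted count $\binom{a}{u}\prod_i (q_i-1)\binom{b_{r_i}}{q_i}$, repackaged through $\EE_t$. What you add is an explicit justification, via additivity of $\wt H_{t-1}$ over disjoint unions when $t-1\geq 1$, of why the simplex $\langle W\cap A\rangle$ and the single-block pieces can be discarded; the paper leaves this step implicit.
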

	
	\begin{proof}
		For $t\geq 2$ and $j\geq 2t$ with $G=\MM(a;\bk{b})$, Hochster's formula implies that
		$$
		\beta_{j-t,j}(G)=\sum_{\substack{W\subseteq V(G)\\ |W|=j}}
		\dim_{\kk}\wt H_{t-1}(\ind(G)_W;\kk).
		$$
		By Proposition~\ref{prop:induced-models}, the only subsets $W$ that can contribute to $\wt H_{t-1}$ are those for which the block part of $W$ meets exactly $t$ blocks, each in at least $2$ vertices. Any selected block of size $1$ would make the relevant join a cone by Lemma~\ref{lem:block-homology}(i), and hence no contribution. Thus, for non zero contribution to $ \beta_{j-t,j}(\MM(a;\bk{b})),$ we choose: (i)  $u$ vertices from $A$, where $0\leq u\leq a$, (ii)   $t$ distinct blocks $B_{r_1},\dots,B_{r_t}$, (iii)   integers $q_i\geq 2$ with $q_1+\cdots+q_t=j-u$, and (iv)   $q_i$ vertices from each selected block $B_{r_i}$. 		
		For such a choice, Proposition~\ref{prop:induced-models} and Lemma~\ref{lem:block-homology}(ii) implies that
		$$
		\dim_{\kk}\wt H_{t-1}(\ind(G)_W;\kk)=\prod_{i=1}^t (q_i-1).
		$$
		The number of such subsets realizing these numbers is
		$$
		\binom{a}{u}\prod_{i=1}^t \binom{b_{r_i}}{q_i}.
		$$
		Therefore, we obtain 
		$$
		\beta_{j-t,j}(G)=
		\sum_{u=0}^{\min{a,j-2t}}
		\binom{a}{u}
		\sum_{\substack{1\leq r_1<\cdots<r_t\leq m\\
				q_1+\cdots+q_t=j-u\\
				2\leq q_i\leq b_{r_i}}}~
		\prod_{i=1}^t (q_i-1)\binom{b_{r_i}}{q_i},
		$$
		which is exactly the second displayed formula. The coefficient form follows, as selecting one monomial from each of $P_{r_1}(z),\dots,P_{r_t}(z)$ and summing over all $t$-subsets of blocks produces the elementary symmetric polynomial $\EE_t(z;\bk{b})$.
	\end{proof}
	
	 \medskip The next corollary packages all higher strands into a single generating function.
	
	\begin{corollary}\label{cor:split-gen}
		For each fixed $t\geq 2$,
		$$
		\sum_{j\geq 0}\beta_{j-t,j}(\MM(a;\bk{b}))\lambda^j
		=(1+\lambda)^a\EE_t(\lambda;\bk{b}).
		$$
	\end{corollary}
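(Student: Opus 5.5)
The plan is to read the generating function off the coefficient form of Theorem~\ref{thm:split-higher}, after checking that its range restriction $j\geq 2t$ does not cost any terms. Fix $t\geq 2$. That theorem gives, for every $j\geq 2t$,
$$
\beta_{j-t,j}(\MM(a;\bk{b}))=\sum_{u=0}^{\min\{a,j-2t\}}\binom{a}{u}[z^{j-u}]\EE_t(z;\bk{b}).
$$
The identity to prove is the equality of the coefficients of $\lambda^j$ on both sides, for every $j\geq 0$.

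\textbf{Step 1: the product side.} Expanding $(1+\lambda)^a=\sum_{u=0}^{a}\binom{a}{u}\lambda^u$ and multiplying by $\EE_t(\lambda;\bk{b})$, the coefficient of $\lambda^j$ is
$$
\sum_{u=0}^{\min\{a,j\}}\binom{a}{u}[\lambda^{j-u}]\EE_t(\lambda;\bk{b}).
$$
Each $P_r(z)$ has only monomials of degree at least $2$, so every product of $t$ distinct $P_r$'s has $z$-adic order at least $2t$, and hence so does $\EE_t$. Therefore $[\lambda^{j-u}]\EE_t=0$ whenever $j-u<2t$, i.e.\ $u>j-2t$. The upper limit $\min\{a,j\}$ may thus be replaced by $\min\{a,j-2t\}$ without changing the sum. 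This matches Theorem~\ref{thm:split-higher} term by term for all $j\geq 2t$.

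\textbf{Step 2: the range $j<2t$.} The main point is to show $\beta_{j-t,j}=0$ there, so that the left-hand side agrees with the right-hand side, whose coefficient we have just shown vanishes. By Hochster's formula (Theorem~\ref{thm:Hochster}), $\beta_{j-t,j}$ is a sum of $\dim\wt H_{t-1}(\ind(G)_W)$ over $|W|=j$. By Proposition~\ref{prop:induced-models} and Lemma~\ref{lem:block-homology}, for $t\geq 2$ the only possible source of $\wt H_{t-1}$ is a join $J(\bk{w})$ with exactly $t$ nonempty blocks, each of size at least $2$. The other components are a simplex, a discrete complex (homology only in degree $0$), or a contractible cone, and a disjoint union only adds $\wt H_0$. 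Such a join forces $|W|\geq 2t$, so nothing is contributed when $j<2t$.

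The same argument covers the formal conventions at the edges: if $j-t<0$ the Betti number is zero by definition, and for $j\leq 1$ both sides are clearly zero. Comparing coefficients of $\lambda^j$ for all $j\geq 0$ then gives the identity.
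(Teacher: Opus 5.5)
Your proof is correct and follows the paper's route: you compare coefficients of $\lambda^j$ on both sides against the coefficient formula of Theorem~\ref{thm:split-higher}. You are more careful than the paper on two points its one-line argument leaves implicit: you justify the truncation $u\le j-2t$ via the $z$-adic order $\geq 2t$ of $\EE_t$, and you use Hochster's formula to check that $\beta_{j-t,j}=0$ in the range $j<2t$, which the theorem does not cover.
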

	
	\begin{proof}
		Multiply the coefficient formula in Theorem~\ref{thm:split-higher} by $\lambda^j$ and sum over $j$. The factor $(1+\lambda)^a$ records the choice of vertices from $A$, while $\EE_t(\lambda;\bk{b})$ records the selected blocks and their internal multiplicities. Now, with these facts, the formula follows.
	\end{proof}
	
	 \medskip The next corollary recovers the equal-block formulas from \cite{Rather2024} and extends them to every strand.
	
	\begin{corollary}\label{cor:split-equal}
		Assume $b_1=\cdots=b_m=b$. Then for $t\geq 2$,
		$$
		\beta_{j-t,j}(\MM(a;(b,\dots,b)))
		= \binom{m}{t}
		\sum_{u=0}^{\min{a,j-2t}}
		\binom{a}{u}[z^{j-u}]P_b(z)^t,
		$$
		where
		$$
		P_b(z)=\sum_{q=2}^{b}(q-1)\binom{b}{q}z^q.
		$$
		Moreover, the linear strand is given by Theorem~\ref{thm:split-linear} with all $b_r=b$.
	\end{corollary}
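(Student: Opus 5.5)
This is a direct specialization of Theorem~\ref{thm:split-higher}, so the plan is to substitute $b_1=\cdots=b_m=b$ into its coefficient form and simplify the elementary symmetric polynomial. When all blocks have size $b$, every block polynomial is the same:
$$
P_1(z)=\cdots=P_m(z)=\sum_{q=2}^{b}(q-1)\binom{b}{q}z^q=P_b(z).
$$

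The key step is evaluating $\EE_t(z;(b,\dots,b))$. By definition, $e_t(P_1,\dots,P_m)$ is a sum over the $\binom{m}{t}$ subsets $\{r_1<\cdots<r_t\}\subseteq\{1,\dots,m\}$. Each subset contributes the product $P_{r_1}(z)\cdots P_{r_t}(z)$. Since all the $P_r$ coincide, every such product equals $P_b(z)^t$. Hence
$$
\EE_t(z;(b,\dots,b))=e_t(P_b,\dots,P_b)=\binom{m}{t}P_b(z)^t.
$$

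Next, substitute this into the first formula of Theorem~\ref{thm:split-higher}. Coefficient extraction $[z^{j-u}]$ is linear, so the scalar $\binom{m}{t}$ can be pulled outside both the extraction and the sum over $u$. This gives exactly the displayed formula, with $u$ ranging over $0\le u\le \min\{a,j-2t\}$. The hypotheses $t\ge 2$ and $j\ge 2t$ are inherited unchanged from the theorem. For $j<2t$ both sides vanish, because $P_b(z)^t$ has no monomials of degree below $2t$.

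The "moreover" clause needs no separate argument. Theorem~\ref{thm:split-linear} holds for arbitrary $\bk{b}$, so setting $b_r=b$ for all $r$ gives the linear strand. If desired, one can simplify it using $\sum_r\binom{b_r}{s}=m\binom{b}{s}$ and $M_s=\binom{mb}{s}-m\binom{b}{s}$.

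There is no real obstacle here. The only point to state carefully is the identity $e_t(x,\dots,x)=\binom{m}{t}x^t$. Its justification is that the $t$-subsets of $\{1,\dots,m\}$ are counted by $\binom{m}{t}$ and each contributes $x^t$.
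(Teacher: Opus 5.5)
Your proposal is correct and is essentially the paper's own proof: specialize Theorem~\ref{thm:split-higher} using $e_t(P_b,\dots,P_b)=\binom{m}{t}P_b(z)^t$, and read the linear strand off Theorem~\ref{thm:split-linear} with all $b_r=b$. One small point in your side remark on $j<2t$: the right side vanishes simply because the $u$-sum is empty, while the vanishing of the left side comes from Hochster's formula (a nonzero $\widetilde H_{t-1}$ requires $t$ blocks each met in at least two vertices), or equivalently from Corollary~\ref{cor:split-gen}.
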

	
	\begin{proof}
		If all blocks have the same size $b$, then every $t$-fold elementary symmetric polynomial in $P_1(z),\dots,P_m(z)$ equals $\binom{m}{t}P_b(z)^t$. The linear strand is immediate from Theorem~\ref{thm:split-linear}.
	\end{proof}

	\begin{example}\label{ex:split-example}
		For  $G=\MM(2;(2,3,4))$, we have  
		$$
		M_s(\bk{b})=\binom{9}{s}-\binom{2}{s}-\binom{3}{s}-\binom{4}{s}.
		$$
		As $a=2$, the linear-strand formula becomes
		$$
		\beta_{j-1,j}(G)
		= \sum_{r=1}^3 (j-1)\binom{b_r}{j}
		+
		\sum_{u=1}^{\min{2,j-1}}
		\binom{2}{u}\sum_{r=1}^3 (j-u)\binom{b_r}{j-u}
		+
		\sum_{u=1}^{\min{2,j-2}}
		\binom{2}{u}M_{j-u}.
		$$
		 For $j=2$,
		$$
		\beta_{1,2}(G)
		= \sum_{r=1}^3 \binom{b_r}{2}
		+
		2\sum_{r=1}^3 \binom{b_r}{1}.
		 = \left(\binom{2}{2}+\binom{3}{2}+\binom{4}{2}\right)
		+
		2\left(\binom{2}{1}+\binom{3}{1}+\binom{4}{1}\right)
		 =28.
		$$
		For $j=3$,
		$$
		\beta_{2,3}(G)
		= \sum_{r=1}^3 2\binom{b_r}{3}
		+
		2\sum_{r=1}^3 2\binom{b_r}{2}
		+
		\sum_{r=1}^3 \binom{b_r}{1}
		+
		2M_2=111.
		$$
		
		\medskip
		\noindent
		For $j=4$,
		$$
		\beta_{3,4}(G)
		= \sum_{r=1}^3 3\binom{b_r}{4}
		+
		2\sum_{r=1}^3 3\binom{b_r}{3}
		+
		\sum_{r=1}^3 2\binom{b_r}{2}
		+
		2M_3+M_2=237.
		$$
		
		\medskip
		\noindent
		Proceeding in the same way for $j=5,6,\dots,11$, we obtain the full linear strand as
		$$
		\begin{aligned}
			\beta_{1,2}(G)&=28, &
			\beta_{2,3}(G)&=111, &
			\beta_{3,4}(G)&=237, \\
			\beta_{4,5}(G)&=352, &
			\beta_{5,6}(G)&=381, &
			\beta_{6,7}(G)&=294, \\
			\beta_{7,8}(G)&=156, &
			\beta_{8,9}(G)&=54, &
			\beta_{9,10}(G)&=11, \\
			\beta_{10,11}(G)&=1. &&
		\end{aligned}
		$$
		For higher strands, the block polynomials are
		$$
		P_1(z)=\sum_{q=2}^{2}(q-1)\binom{2}{q}z^q=z^2,
		$$
		$$
		P_2(z)=\sum_{q=2}^{3}(q-1)\binom{3}{q}z^q=3z^2+2z^3,
		$$
		$$
		P_3(z)=\sum_{q=2}^{4}(q-1)\binom{4}{q}z^q=6z^2+8z^3+3z^4.
		$$
		 For the second strand,  we need
		$ \EE_2(z;\bk{b})=P_1P_2+P_1P_3+P_2P_3. $
		By direct computations, we have
		$$
		P_1P_2=z^2(3z^2+2z^3)=3z^4+2z^5,
		$$
		$$
		P_1P_3=z^2(6z^2+8z^3+3z^4)=6z^4+8z^5+3z^6,
		$$
		$$
		P_2P_3=(3z^2+2z^3)(6z^2+8z^3+3z^4)=18z^4+36z^5+25z^6+6z^7.
		$$
		Therefore, we obtain 
		$ \EE_2(z;\bk{b})=27z^4+46z^5+28z^6+6z^7. $
		Since $a=2$, Theorem~\ref{thm:split-higher} implies that 
		$ \beta_{j-2,j}(G)=[z^j](1+z)^2\EE_2(z;\bk{b}). $
		Now
		\begin{align*}
		 (1+z)^2\EE_2(z;\bk{b})
		=& (1+2z+z^2)(27z^4+46z^5+28z^6+6z^7),\\
		=& 27z^4+100z^5+147z^6+108z^7+40z^8+6z^9. 
		\end{align*}
		So, the second strand is
		$$
		\beta_{2,4}(G)=27,
		\beta_{3,5}(G)=100,
		\beta_{4,6}(G)=147, 
		\beta_{5,7}(G)=108,
		\beta_{6,8}(G)=40,
		\beta_{7,9}(G)=6.
		$$
		
		\medskip
		\noindent
		For the third strand,  we have
		$ \EE_3(z;\bk{b})=P_1P_2P_3. $
		With 
		$ P_1P_2=z^2(3z^2+2z^3)=3z^4+2z^5, $
		we get
		$$
		\EE_3(z;\bk{b})
		= (3z^4+2z^5)(6z^2+8z^3+3z^4)
		=
		18z^6+36z^7+25z^8+6z^9.
		$$
		For $a=2$,
		$$
		\beta_{j-3,j}(G)=[z^j](1+z)^2\EE_3(z;\bk{b}).
		$$
		Thus, we have
		\begin{align*}
		(1+z)^2\EE_3(z;\bk{b})
		=& (1+2z+z^2)(18z^6+36z^7+25z^8+6z^9),\\
		=&18z^6+72z^7+115z^8+92z^9+37z^{10}+6z^{11}.
		\end{align*}
		So the third strand is
		$$
		\beta_{3,6}(G)=18,
		\beta_{4,7}(G)=72,
		\beta_{5,8}(G)=115, 
		\beta_{6,9}(G)=92,
		\beta_{7,10}(G)=37,
		\beta_{8,11}(G)=6.
		$$
		
		\medskip
		\noindent
		Putting everything together, the complete list of nonzero graded Betti numbers is
		$$
		\begin{aligned}
			&\beta_{1,2}=28,\ \beta_{2,3}=111,\ \beta_{3,4}=237,\ \beta_{4,5}=352,\ \beta_{5,6}=381,\ \beta_{6,7}=294,\\
			&\beta_{7,8}=156,\ \beta_{8,9}=54,\ \beta_{9,10}=11,\ \beta_{10,11}=1,\\
			&\beta_{2,4}=27,\ \beta_{3,5}=100,\ \beta_{4,6}=147,\ \beta_{5,7}=108,\ \beta_{6,8}=40,\ \beta_{7,9}=6,\\
			&\beta_{3,6}=18,\ \beta_{4,7}=72,\ \beta_{5,8}=115,\ \beta_{6,9}=92,\ \beta_{7,10}=37,\ \beta_{8,11}=6.
		\end{aligned}
		$$
		 A small remark is worth recording here. The graph $G=\MM(2;(2,3,4))$ has exactly three nontrivial clique blocks, so the resolution has precisely three nonzero strands, the linear strand, the strand with $j-i=2$, and the strand with $j-i=3$. In particular, there are no Betti numbers with $j-i\geq 4$. This is exactly the kind of higher-strand behavior that does not show up if one only looks at the initial formulas in the equal-block setting.
	\end{example}

	Table \ref{tab:split-betti} gives the Betti numbers of $\MM(2;(2,3,4))$, and their interpretation.
	\begin{table}[H]
		\centering
		\caption{Selected Betti numbers of $G=\MM(2;(2,3,4))$ from Example~\ref{ex:split-example}. The third column is the direct coefficient extraction from Theorems~\ref{thm:split-linear} and \ref{thm:split-higher}.}
		\label{tab:split-betti}
		\begin{tabular}{@{}cccc@{}}
			\toprule
			Bidegree $(i,j)$ & Formula source & Value & Interpretation \\ \midrule
			$(1,2)$ & linear strand & $28$ & edge generators in the first strand \\
			$(2,3)$ & linear strand & $111$ & next linear syzygies \\
			$(2,4)$ & $t=2$ strand & $27$ & first genuinely higher strand \\
			$(3,6)$ & $t=3$ strand & $18$ & top regularity strand entry \\
			$(8,11)$ & $t=3$ strand & $6$ & near-terminal higher-strand entry \\
			$(10,11)$ & linear strand & $1$ & last-column extremal entry \\ \bottomrule
		\end{tabular}
	\end{table}
	
	 The formulas in Theorems~\ref{thm:split-linear} and \ref{thm:split-higher} are new even in the equal-block case. The work \cite{Rather2024} determined only the initial Betti numbers of $MCS^a_{b,m}$, whereas Corollary~\ref{cor:split-equal} produces every higher strand and therefore the entire Betti table. The heterogeneous version with arbitrary $b_1,\dots,b_m$ does not appear in the earlier literature.
	
	\section{Exact graded Betti numbers for generalized clique-star graphs}\label{sec:clique}
	We now consider 	$ \CC(a;\bk{b})=K_a * \bigsqcup_{r=1}^m K_{b_r}. $
	The topology of the block part is unchanged from Section~\ref{sec:split}, but the $A$-part is now discrete. This changes the linear strand while leaving the higher strands intact.
	
	 \medskip The next theorem shows exactly how the linear strand changes when the independent part becomes a clique.
	
	\begin{theorem}\label{thm:clique-linear}
		For $j\geq 2$,
		\begin{align*}
		 \beta_{j-1,j}(\CC(a;\bk{b}))=&
		(j-1)\binom{a}{j}
		+\sum_{r=1}^m (j-1)\binom{b_r}{j}
		+\sum_{u=1}^{\min{a,j-1}}
		(j-1)\binom{a}{u}\sum_{r=1}^m \binom{b_r}{j-u}\\
		&+\sum_{u=1}^{\min{a,j-2}}
		u\binom{a}{u}M_{j-u}(\bk{b}). 
		\end{align*}
	\end{theorem}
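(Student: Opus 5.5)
The plan is to follow the proof of Theorem~\ref{thm:split-linear} almost verbatim, replacing the simplex $\langle W\cap A\rangle$ by the discrete complex $D_u$ from Proposition~\ref{prop:induced-models}(ii). By Hochster's formula (Theorem~\ref{thm:Hochster}) with $i=j-1$,
$$
\beta_{j-1,j}(\CC(a;\bk{b}))=\sum_{|W|=j}\dim_{\kk}\wt H_0(\ind(G)_W;\kk)=\sum_{|W|=j}\bigl(c(W)-1\bigr),
$$
where $c(W)$ is the number of connected components of $\ind(G)_W$. So the whole task is to count components of each induced model, sorting $j$-subsets $W$ by $u=|W\cap A|$ and by the number $t$ of blocks that $W$ meets.

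The cases are as follows, each read off from Proposition~\ref{prop:induced-models}(ii).
\begin{enumerate}[label=\textup{(\arabic*)},noitemsep]
\item $t=0$, i.e.\ $W\subseteq A$. Here $\ind(G)_W\cong D_j$, which contributes $j-1$. There are $\binom{a}{j}$ such sets, giving the term $(j-1)\binom{a}{j}$. This is new compared with the split case, where this piece was a simplex.
\item $u=0$ and $t=1$. We get $D_j$, contributing $(j-1)\binom{b_r}{j}$ summed over $r$.
\item $u\ge1$ and $t=1$. We get $D_u\sqcup D_{j-u}$, which has $j$ components and so contributes $j-1$. The count is $\binom{a}{u}\binom{b_r}{j-u}$ with $1\le u\le \min\{a,j-1\}$, which yields the third sum.
\item $u\ge1$ and $t\ge2$. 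We get $D_u\sqcup J(\bk{w})$, and $J(\bk{w})$ is connected because it is a join of at least two nonempty complexes. There are therefore $u+1$ components, contributing $u$. The number of such $W$ is $\binom{a}{u}M_{j-u}(\bk{b})$ with $1\le u\le\min\{a,j-2\}$, since at least two vertices must lie in the blocks. This gives the last sum.
\item $u=0$ and $t\ge2$. The complex $J(\bk{w})$ is connected and contributes nothing.
\end{enumerate}

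These five cases partition all $j$-subsets, so summing them gives the stated formula. The summation ranges automatically handle vanishing binomials; for example, $\binom{b_r}{j-u}=0$ when $j-u>b_r$.

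There is no real obstacle here. The only point needing care is the component count in case (4), where the multiplicity is $u$ rather than $1$, and checking that the ranges of $u$ match the statement. In case (3), the multiplicity $(j-1)$ replaces the split-case $(j-u)$ because the $A$-part now contributes $u$ components instead of one.
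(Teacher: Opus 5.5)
Your proposal is correct and follows essentially the same route as the paper: Hochster's formula in reduced homological degree $0$, reading off component counts from Proposition~\ref{prop:induced-models}(ii), the same contributing cases (including multiplicity $j-1$ when $W$ meets $A$ and one block, and $u$ when it meets $A$ and at least two blocks), and the $u=0$, $t\geq 2$ case contributing nothing. Your explicit check of the summation ranges, namely $j-u\geq 2$ so that $M_{j-u}$ counts exactly the subsets meeting at least two blocks, is a small improvement in care over the paper's write-up.
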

	
	\begin{proof}
		For $G=\CC(a;\bk{b})$, Hochster's formula gives
		$$
		\beta_{j-1,j}(G)=\sum_{\substack{W\subseteq V(G)\\ |W|=j}}
		\dim_{\kk}\wt H_0(\ind(G)_W;\kk).
		$$
		For non zero contribution to $\beta_{j-1,j}(G) $, we classify the contributing subsets.
		
		\smallskip
		
		\noindent\textbf{Case 1:} If $W\subseteq A$, then $\ind(G)_W\cong D_j$, so the contribution is $(j-1)\binom{a}{j}$.
		
		\smallskip
		
		\noindent\textbf{Case 2:} If $W\subseteq B_r$ for some $r$, then $\ind(G)_W\cong D_j$, contributing
		$$
		\sum_{r=1}^m (j-1)\binom{b_r}{j}.
		$$
		
		\smallskip
		
		\noindent\textbf{Case 3:} If $W$ meets $A$ and exactly one block $B_r$, then for $u=|W\cap A|$ and $j-u=|W\cap B_r|$, with $1\leq u\leq j-1$, we have
		$$
		\ind(G)_W\cong D_u\sqcup D_{j-u}.
		$$
		So, the number of connected components is $u+(j-u)=j$, and hence
		$$
		\dim_{\kk}\wt H_0(\ind(G)_W;\kk)=j-1.
		$$
		The contribution in this case is
		$$
		\sum_{u=1}^{\min{a,j-1}}
		(j-1)\binom{a}{u}\sum_{r=1}^m \binom{b_r}{j-u}.
		$$
		
		\smallskip
		
		\noindent\textbf{Case 4:} If $W$ meets $A$ and at least two distinct blocks, then for $u=|W\cap A|$,  Proposition~\ref{prop:induced-models} implies that
		$$
		\ind(G)_W\cong D_u\sqcup J(\bk{w}),
		$$
		where $J(\bk{w})$ is connected. Therefore the total number of connected components is $u+1$, and hence
		$$
		\dim_{\kk}\wt H_0(\ind(G)_W;\kk)=u.
		$$
		For fixed $u$, the number of such subsets is $\binom{a}{u}M_{j-u}(\bk{b})$, so this case contributes
		$$
		\sum_{u=1}^{\min{a,j-2}}
		u\binom{a}{u}M_{j-u}(\bk{b}).
		$$
		
		\smallskip
		 The remaining subsets cases contribute nothing as $\ind(G)_W$ is a connected component. Now summing the four cases,  the result follows.
	\end{proof}
	
	 \medskip The next theorem shows that all higher strands are unchanged from the split--join case.
	
	\begin{theorem}\label{thm:clique-higher}
		Let $t\geq 2$. Then for every $j\geq 2t$,
		$$
		\beta_{j-t,j}(\CC(a;\bk{b}))
		= \sum_{u=0}^{\min{a,j-2t}}
		\binom{a}{u}[z^{j-u}]\EE_t(z;\bk{b}).
		$$
		or equivalently,
		$$
		\beta_{j-t,j}(\CC(a;\bk{b}))
		= \sum_{u=0}^{\min{a,j-2t}}
		\binom{a}{u}
		\sum_{\substack{1\leq r_1<\cdots<r_t\leq m\\
				q_1+\cdots+q_t=j-u\\
				2\leq q_i\leq b_{r_i}}}\,
		\prod_{i=1}^t (q_i-1)\binom{b_{r_i}}{q_i}.
		$$
	\end{theorem}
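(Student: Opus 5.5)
Following the proof of Theorem~\ref{thm:split-higher}, I would start from Hochster's formula (Theorem~\ref{thm:Hochster}). For $t\geq 2$ and $G=\CC(a;\bk{b})$ it gives
$$
\beta_{j-t,j}(G)=\sum_{|W|=j}\dim_{\kk}\wt H_{t-1}(\ind(G)_W;\kk).
$$
Fix $W$, with $u=|W\cap A|$, block sizes $w_r=|W\cap B_r|$, and $t'$ the number of blocks met. Proposition~\ref{prop:induced-models}(ii) lists $\ind(G)_W$ as one of five models. The first step is to see that each model has reduced homology in degree $t-1\geq 1$ only through the join component.

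For a disjoint union $X\sqcup Y$ with both pieces nonempty, $\wt H_q(X\sqcup Y)\cong H_q(X)\oplus H_q(Y)$ for every $q\geq 1$, and in positive degrees reduced and unreduced homology agree. The discrete complex $D_u$ has no homology in positive degree, so it contributes nothing. Note that $D_u$ is not contractible, unlike the simplex in the split case. This is the one place where the argument differs from the $\MM$ case, and I expect it to be the main point to handle carefully. The resolution is simply that $t-1\geq 1$ is positive, so the disconnectedness that $D_u$ adds only affects $\wt H_0$.

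Consequently, in every model,
$$
\wt H_{t-1}(\ind(G)_W)\cong \wt H_{t-1}(J(w_{r_1},\dots,w_{r_{t'}})).
$$
When $t'=1$ the join is a single $D_{w_r}$, which has no homology in degree $t-1\geq 1$. When $t'=0$ there is no block part at all. When $u=0$ the isomorphism is trivially true. By Lemma~\ref{lem:block-homology}, the right-hand side is nonzero only when $t'=t$ and every $w_{r_i}=q_i\geq 2$. In that case its dimension is $\prod_{i}(q_i-1)$, and this is independent of $u$.

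Counting then proceeds exactly as in Theorem~\ref{thm:split-higher}:
\begin{enumerate}[label=\textup{(\roman*)},noitemsep]
\item choose $u$ vertices of $A$ in $\binom{a}{u}$ ways;
\item choose the $t$ blocks $r_1<\cdots<r_t$ and sizes $2\leq q_i\leq b_{r_i}$ with $\sum q_i=j-u$;
\item choose the vertices inside the blocks in $\prod_i\binom{b_{r_i}}{q_i}$ ways.
\end{enumerate}
Since $\sum q_i\geq 2t$, we need $u\leq j-2t$, which gives the range $0\leq u\leq\min\{a,j-2t\}$. Summing produces the second displayed formula. The coefficient form $[z^{j-u}]\EE_t(z;\bk{b})$ follows as before, by expanding the elementary symmetric polynomial of the block polynomials $P_r$.

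Alternatively, I could cite the computation in Theorem~\ref{thm:split-higher} verbatim once the homology identification above is established. The two complexes $\ind(\MM)_W$ and $\ind(\CC)_W$ differ only in the $A$-component, a simplex versus $D_u$, and neither component carries homology in degree $t-1\geq 1$. The two homology contributions therefore coincide for every $W$.
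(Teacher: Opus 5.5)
Your proposal is correct and follows essentially the same route as the paper. The paper's proof likewise notes that $\ind(\CC(a;\bk{b}))_W$ and $\ind(\MM(a;\bk{b}))_W$ differ only in the $A$-component (discrete $D_u$ versus a simplex), that this component carries no reduced homology in degree $t-1\geq 1$, and then reuses the block counting of Theorem~\ref{thm:split-higher}. Your version just spells out the disjoint-union homology step, the case analysis via Lemma~\ref{lem:block-homology}, and the counting explicitly.
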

	
	\begin{proof}
		By Proposition~\ref{prop:induced-models}, the only difference between $\ind(\CC(a;\bk{b}))_W$ and $\ind(\MM(a;\bk{b}))_W$ is that the simplex on $A$ is replaced by a discrete set. But for $t\geq 2$ we are computing $\wt H_{t-1}$ with $t-1\geq 1$. The discrete part on $A$ has no positive-dimensional reduced homology, so the only contribution again comes from the join component of the selected blocks. Exactly the same block counting as in Theorem~\ref{thm:split-higher} therefore applies.
	\end{proof}
	
	 \medskip The next corollary packages the higher strands of the clique-star family into the same generating function as before.
	
	\begin{corollary}\label{cor:clique-gen}
		For each fixed $t\geq 2$,
		$$
		\sum_{j\geq 0}\beta_{j-t,j}(\CC(a;\bk{b}))\lambda^j
		=(1+\lambda)^a\EE_t(\lambda;\bk{b}).
		$$
	\end{corollary}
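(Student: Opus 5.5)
The plan is to reduce Corollary~\ref{cor:clique-gen} to Theorem~\ref{thm:clique-higher}, in the same way Corollary~\ref{cor:split-gen} was obtained from Theorem~\ref{thm:split-higher}. Fix $t\geq 2$ and write $\EE_t(z;\bk{b})=\sum_s c_s z^s$ with $c_s=[z^s]\EE_t(z;\bk{b})$.

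The first step is to record where the coefficients can be nonzero. Each $P_r(z)$ has lowest degree at least $2$, so every monomial in $\EE_t$ has degree at least $2t$. Hence $c_s=0$ for $s<2t$.

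Next, I expand the right-hand side by the Cauchy product:
$$
[\lambda^j](1+\lambda)^a\EE_t(\lambda;\bk{b})=\sum_{u=0}^{a}\binom{a}{u}c_{j-u}.
$$
Because $c_{j-u}=0$ unless $j-u\geq 2t$, the sum can be truncated to $0\leq u\leq \min\{a,j-2t\}$. For $j\geq 2t$ this is exactly the formula of Theorem~\ref{thm:clique-higher}.

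It then remains to handle $j<2t$ on the left-hand side, where the coefficient on the right is $0$ by the same truncation. Here I would argue directly from Hochster's formula (Theorem~\ref{thm:Hochster}). Let $|W|=j$. By Proposition~\ref{prop:induced-models}(ii), $\ind(G)_W$ is either a discrete complex or a disjoint union of a discrete part with a join $J(\bk{w})$. For $t-1\geq 1$, $\wt H_{t-1}(\ind(G)_W)$ is nonzero only if the join $J(\bk{w})$ carries it. By Lemma~\ref{lem:block-homology}, this requires exactly $t$ blocks, each meeting $W$ in at least $2$ vertices, so $j\geq 2t$. Therefore $\beta_{j-t,j}=0$ for $j<2t$. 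This is the only point needing care, and the statement of Theorem~\ref{thm:clique-higher} was restricted to $j\geq 2t$, so it has to be supplied separately.

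Finally, I multiply by $\lambda^j$ and sum over all $j\geq 0$, and the identity of generating functions follows. Equivalently, since Theorem~\ref{thm:clique-higher} gives the same strand values as Theorem~\ref{thm:split-higher}, the conclusion coincides with Corollary~\ref{cor:split-gen}.
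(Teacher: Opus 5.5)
Your proposal is correct and follows the paper's route: the paper's proof reads off the coefficient of $\lambda^j$ from Theorem~\ref{thm:clique-higher}, exactly as in Corollary~\ref{cor:split-gen}, and that is your Cauchy-product step. Your separate Hochster-formula check that $\beta_{j-t,j}=0$ for $j<2t$, the range not covered by Theorem~\ref{thm:clique-higher}, is left implicit in the paper, so your version is slightly more complete.
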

	
	\begin{proof}
		The proof is identical to that of Corollary~\ref{cor:split-gen}, using Theorem~\ref{thm:clique-higher}.
	\end{proof}
	
	 \medskip The next corollary as an application of Theorems~\ref{thm:clique-linear} and \ref{thm:clique-higher}, recovers the  equal-block clique-star formulas from \cite{Rather2024} to the complete graded Betti table.
	
	\begin{corollary}\label{cor:clique-equal}
		Assume $b_1=\cdots=b_m=b$. Then for $t\geq 2$,
		$$
		\beta_{j-t,j}(\CC(a;(b,\dots,b)))
		= \binom{m}{t}
		\sum_{u=0}^{\min{a,j-2t}}
		\binom{a}{u}[z^{j-u}]P_b(z)^t,
		$$
		with
		$$
		P_b(z)=\sum_{q=2}^{b}(q-1)\binom{b}{q}z^q.
		$$
		The linear strand is given by Theorem~\ref{thm:clique-linear} with all $b_r=b$.
	\end{corollary}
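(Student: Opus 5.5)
The statement to prove is Corollary~\ref{cor:clique-equal}. I would specialize Theorem~\ref{thm:clique-higher} to equal blocks, exactly as Corollary~\ref{cor:split-equal} was obtained from Theorem~\ref{thm:split-higher}.

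Fix $t\geq 2$ and $j\geq 2t$. Theorem~\ref{thm:clique-higher} gives
$$
\beta_{j-t,j}(\CC(a;\bk{b}))=\sum_{u=0}^{\min\{a,j-2t\}}\binom{a}{u}[z^{j-u}]\EE_t(z;\bk{b}).
$$
When $b_1=\cdots=b_m=b$, each block polynomial $P_r(z)=\sum_{q=2}^{b_r}(q-1)\binom{b_r}{q}z^q$ equals the single polynomial $P_b(z)$. So
$$
\EE_t(z;\bk{b})=e_t(P_b,\dots,P_b)=\binom{m}{t}P_b(z)^t,
$$
because each of the $\binom{m}{t}$ squarefree products in $e_t$ contributes the same factor $P_b(z)^t$. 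Coefficient extraction $[z^{j-u}]$ is linear, so the scalar $\binom{m}{t}$ comes outside the sum over $u$. This is the displayed formula.

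For the linear strand, the corollary simply asserts that $\beta_{j-1,j}$ is Theorem~\ref{thm:clique-linear} with every $b_r=b$. Nothing needs proving beyond substitution. If a simplified form were wanted, one would write $\sum_r\binom{b_r}{s}=m\binom{b}{s}$ and $M_s=\binom{mb}{s}-m\binom{b}{s}$.

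I do not expect any real obstacle. The only point to check is the degenerate range: if $t>m$, then $e_t=0$ and $\binom{m}{t}=0$, so both sides vanish. Likewise, if $b=1$, then $P_b=0$ and both sides vanish. So the identity holds without extra hypotheses.
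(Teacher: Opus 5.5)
Your proposal is correct and follows the same route as the paper. The paper gives no separate proof here, but the intended argument is the one in Corollary~\ref{cor:split-equal}: specialize Theorem~\ref{thm:clique-higher} using $e_t(P_b,\dots,P_b)=\binom{m}{t}P_b(z)^t$, and read the linear strand directly off Theorem~\ref{thm:clique-linear}. Your checks of the degenerate cases $t>m$ and $b=1$ are a harmless addition.
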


	\begin{example}\label{ex:clique-example}
		Let $G=\CC(2;(2,3,4))$. Then $N =11,$ and Theorems~\ref{thm:clique-linear} and \ref{thm:clique-higher}, we have  $
		B=b_1+b_2+b_3=9, $
		and for the linear strand we need
		$$
		M_s(\bk{b})=\binom{9}{s}-\binom{2}{s}-\binom{3}{s}-\binom{4}{s}.
		$$
		In particular, $
		M_2=36-1-3-6=26,
		M_3=84-0-1-4=79,$ and $
		M_9=1.
		$
		 For the higher strands we also use the block polynomials
		$$
		P_1(z)=\sum_{q=2}^{2}(q-1)\binom{2}{q}z^q=z^2,
		$$
		$$
		P_2(z)=\sum_{q=2}^{3}(q-1)\binom{3}{q}z^q=3z^2+2z^3,
		$$
		$$
		P_3(z)=\sum_{q=2}^{4}(q-1)\binom{4}{q}z^q=6z^2+8z^3+3z^4.
		$$
		Now, the linear-strand formula gives
		\begin{align*}
		 \beta_{1,2}(G)
		=& (2-1)\binom{2}{2}
		+
		\sum_{r=1}^3 (2-1)\binom{b_r}{2}
		+
		\sum_{u=1}^{1}(2-1)\binom{2}{u}\sum_{r=1}^3 \binom{b_r}{2-u}.
		\beta_{1,2}(G)\\
		=& \binom{2}{2}
		+
		\left(\binom{2}{2}+\binom{3}{2}+\binom{4}{2}\right)
		+
		\binom{2}{1}\left(\binom{2}{1}+\binom{3}{1}+\binom{4}{1}\right)=29.
		\end{align*}
		 Now for $j=3$, we have
		$$
		\beta_{2,3}(G)
		= 2\binom{2}{3}
		+
		\sum_{r=1}^3 2\binom{b_r}{3}
		+
		\sum_{u=1}^{2} 2\binom{2}{u}\sum_{r=1}^3 \binom{b_r}{3-u}
		+
		\sum_{u=1}^{1} u\binom{2}{u}M_{3-u}=120.
		$$
		 For  $\beta_{2,4}(G)$,  Theorem~\ref{thm:clique-higher}, gives
		$$
		\beta_{2,4}(G)=\sum_{u=0}^{\min{2,4-4}} \binom{2}{u}[z^{4-u}]\EE_2(z;\bk{b}).
		$$
		As $\min\{2,0\}=0$, only $u=0$ occurs, and 
		$ \beta_{2,4}(G)=[z^4]\EE_2(z;\bk{b}). $
		Now
		$$
		\EE_2(z;\bk{b})=P_1(z)P_2(z)+P_1(z)P_3(z)+P_2(z)P_3(z),
		$$
		so the coefficient of $z^4$ comes only from choosing the $z^2$-term in each factor, and we have
		$$
		[z^4]P_1P_2 = 1\cdot 3=3, ~
		[z^4]P_1P_3 = 1\cdot 6=6, ~
		[z^4]P_2P_3 = 3\cdot 6=18.
		$$
		Hence, we have
		$ \beta_{2,4}(G)=3+6+18=27. $
		
		\medskip
		\noindent
		For $\beta_{3,6}(G)$,  Theorem~\ref{thm:clique-higher}, gives
		$$
		\beta_{3,6}(G)=\sum_{u=0}^{\min{2,6-6}} \binom{2}{u}[z^{6-u}],\EE_3(z;\bk{b}).
		$$
		Only $u=0$ contributes, and since there are exactly three blocks, so
		$ \EE_3(z;\bk{b})=P_1(z)P_2(z)P_3(z). $
		Thus, we obtain
		$ \beta_{3,6}(G)=[z^6]\big(P_1P_2P_3\big). $ The only way to obtain degree $6$ is to take the $z^2$-term from each factor. Therefore, we have 
		$ \beta_{3,6}(G)=1\cdot 3\cdot 6=18. $
		
		Also, $11-8=3$, so $\beta_{8,11}(G)$ also lies on the third strand, and 
		$$
		\beta_{8,11}(G)=\sum_{u=0}^{2}\binom{2}{u}[z^{11-u}]\EE_3(z;\bk{b}).
		$$
		Since $\EE_3(z;\bk{b})=P_1P_2P_3$ has maximum degree
		$ 2+3+4=9, $
		the terms with $u=0$ and $u=1$ vanish, as they ask for the coefficients of $z^{11}$ and $z^{10}$. Hence only $u=2$ survives, and 
		$$
		\beta_{8,11}(G)=\binom{2}{2}[z^9](P_1P_2P_3)=[z^9](P_1P_2P_3).
		$$
		The top-degree term is obtained by taking the largest-degree monomial from each factor
		$ z^2\cdot 2z^3\cdot 3z^4=6z^9. $
		So, we get  $\beta_{8,11}(G)=6.$ 
		
		\medskip
		\noindent
		For $\beta_{10,11}(G)$,  $j=11$ in Theorem~\ref{thm:clique-linear}, and with $a=2$,the block part has only $9$ vertices. So, all terms involving $\binom{2}{11}$, $\binom{b_r}{11}$, $\binom{b_r}{10}$, and $\binom{b_r}{9}$ vanish. Thus, we get 
		$$
		\beta_{10,11}(G)
		= \sum_{u=1}^{2} u\binom{2}{u}M_{11-u}.
		$$
		AS
		$ M_{10}=0 ,$since  choose $10$ vertices from a set of size $9$ is absurd, while
		$$
		M_9=\binom{9}{9}-\binom{2}{9}-\binom{3}{9}-\binom{4}{9}=1,
		$$
		and hence
		$$
		\beta_{10,11}(G)
		= 1\cdot \binom{2}{1}M_{10}
		+
		2\cdot \binom{2}{2}M_9
		= 0+2\cdot 1\cdot 1=2.
		$$
		 It is worth emphasizing what these numbers are showing. The entries $\beta_{1,2}$ and $\beta_{2,3}$ come from the linear strand and are sensitive to the fact that the distinguished part is a clique. By contrast, the higher-strand values $\beta_{2,4}$, $\beta_{3,6}$, and $\beta_{8,11}$ depend only on the three block polynomials $P_1,P_2,P_3$. For that reason they coincide with the corresponding values for $\MM(2;(2,3,4))$, while the linear strand does not.
	\end{example}

	Table \ref{tab:split-clique-compare} gives the Betti numbers comparison for $\MM(2;(2,3,4))$ and $\CC(2;(2,3,4))$.
	\begin{table}[H]
		\centering
		\caption{Comparison of selected Betti numbers for $\MM(2;(2,3,4))$ and $\CC(2;(2,3,4))$.}
		\label{tab:split-clique-compare}
		\begin{tabular}{@{}cccc@{}}
			\toprule
			Bidegree $(i,j)$ & $\MM(2;(2,3,4))$ & $\CC(2;(2,3,4))$ & Observation \\ \midrule
			$(1,2)$ & $28$ & $29$ & linear strands differ \\
			$(2,3)$ & $111$ & $120$ & linear strands differ \\
			$(2,4)$ & $27$ & $27$ & higher strands coincide \\
			$(3,6)$ & $18$ & $18$ & top regularity strand coincides \\
			$(8,11)$ & $6$ & $6$ & same higher-strand tail \\
			$(10,11)$ & $1$ & $2$ & last-column corner differs \\ \bottomrule
		\end{tabular}
	\end{table}
	
	 Theorem~\ref{thm:clique-linear} extends the equal-block initial formulas from \cite{Rather2024} to arbitrary block sizes, while Theorem~\ref{thm:clique-higher} shows that the complete higher-strand pattern is already determined by the block-join component. This uniformity is not visible in the previous ad hoc calculations.
	
	\section{Regularity, extremal Betti numbers, and projective dimension}\label{sec:invariants}
	
	In this section, we extract the main homological invariants from the explicit Betti formulas.
	
	 \medskip The next theorem gives the regularity of the quotient ring for both graph families in a single closed formula.
	
	\begin{theorem}\label{thm:regularity}
		Let $G$ be either $\MM(a;\bk{b})$ or $\CC(a;\bk{b})$. Then
		$ \reg(G)=\max\{1,\nu\}, $
		where $\nu=\#\{r:b_r\geq 2\}$, and consequently, 		$ \reg(I(G))=1+\max\{1,\nu\}. $
	\end{theorem}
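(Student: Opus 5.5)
Via Hochster's formula (Theorem~\ref{thm:Hochster}), $\beta_{i,j}(G)\neq 0$ with $j-i=k$ exactly when some induced subcomplex $\ind(G)_W$ with $|W|=j$ has $\wt H_{k-1}(\ind(G)_W;\kk)\neq 0$. So $\reg(G)$ is the largest $k$ for which some induced subcomplex has nonzero reduced homology in degree $k-1$. Proposition~\ref{prop:induced-models} lists every induced subcomplex, so we will get an upper bound from vanishing and a lower bound from a witness. The statement about $\reg(I(G))$ then follows from $\reg(I(G))=\reg(R/I(G))+1$.

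\textbf{Upper bound.} Fix $W$ and write $\ind(G)_W=X\sqcup Y$ as in Proposition~\ref{prop:induced-models}. Here $X$ is the $A$-part (a simplex, a discrete complex, or empty) and $Y$ is the block part $D_{w_r}$ or $J(w_{r_1},\dots,w_{r_t})$. For a disjoint union, reduced homology in positive degree is the direct sum of that of the pieces, while $\wt H_0$ only counts components.
\begin{itemize}[noitemsep]
\item Simplices and discrete complexes have no reduced homology in degree $\geq 1$.
\item By Lemma~\ref{lem:block-homology}, $J(\bk{w})$ is either contractible or has homology only in degree $t-1$. In the latter case every chosen $w_i\geq 2$, so the $t$ chosen blocks all have $b_{r_i}\geq 2$, giving $t\leq\nu$.
\end{itemize}
Hence $\wt H_{k-1}(\ind(G)_W)\neq 0$ forces $k=1$ or $k\leq\nu$, so $\reg(G)\leq\max\{1,\nu\}$. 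We also check $\reg(G)\geq 1$: the graph has an edge, since $a\geq 1$, $m\geq 1$ and the join joins $A$ to the blocks, so $\beta_{1,2}\neq 0$.

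\textbf{Lower bound when $\nu\geq 2$.} Choose $W$ to consist of exactly two vertices from each of the $\nu$ blocks with $b_r\geq 2$, and nothing from $A$. Then $\ind(G)_W\cong J(2,\dots,2)$, and by Lemma~\ref{lem:block-homology}(ii) we get $\wt H_{\nu-1}\cong\kk$. This gives $\beta_{\nu,2\nu}\neq 0$. Equivalently, in Theorems~\ref{thm:split-higher} and \ref{thm:clique-higher} with $t=\nu$, $u=0$, $j=2\nu$, the coefficient $[z^{2\nu}]\EE_\nu$ equals $\prod_{b_r\geq 2}\binom{b_r}{2}>0$. If $\nu\leq 1$, the bound $\geq 1$ already suffices.

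The only delicate point is the upper-bound bookkeeping. We must ensure that disjoint union does not create higher homology (it does not: $\wt H_q(X\sqcup Y)=\wt H_q(X)\oplus\wt H_q(Y)$ for $q\geq 1$). We must also ensure that blocks with $b_r=1$ can never participate in a non-contractible join. This is exactly Lemma~\ref{lem:block-homology}(i), because such a block contributes $w_r=1$.
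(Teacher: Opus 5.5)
Your proof is correct and follows essentially the same route as the paper. The upper bound rests on the fact that only joins of blocks each meeting $W$ in at least two vertices carry homology above degree $0$; the paper reads this off from Theorems~\ref{thm:split-higher} and \ref{thm:clique-higher}, which encode exactly your Hochster-level argument. The lower bound in both comes from an explicit nonzero entry on the $\nu$-th strand, and the only difference is the witness: you use the first entry $\beta_{\nu,2\nu}=\prod_{b_r\geq 2}\binom{b_r}{2}$, whereas the paper uses the corner $\beta_{a+\sigma-\nu,\,a+\sigma}=\prod_{b_r\geq 2}(b_r-1)$, and either suffices.
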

	
	\begin{proof}
		By Theorems~\ref{thm:split-higher} and \ref{thm:clique-higher}, the strand with difference $j-i=t$ can be nonzero only if one chooses $t$ distinct blocks each contributing at least two vertices. Therefore, no strand with $t>\nu$ can occur. On the other hand, if $\nu\geq 2$, select exactly the $\nu$ blocks of size at least $2$ and choose all their vertices. Then in the coefficient formula for $t=\nu$, the monomial $z^\sigma$ appears with coefficient 	$ \prod_{b_r\geq 2}(b_r-1)\neq 0. $
		Choosing all $a$ vertices from the distinguished part contributes the term
		$$
		\beta_{a+\sigma-\nu,\ a+\sigma}(G)\neq 0.
		$$
		Hence, $\reg(G)\geq \nu$. Since no larger difference can occur, so $\reg(G)=\nu$.
		
		If $\nu\leq 1$, then all higher strands vanish. The linear strand is nonzero,  since every nontrivial graph in our families contains an edge. Thus, $\reg(G)=1$. Combining the two cases, we have
		$$
		\reg(G)=\max\{1,\nu\}.
		$$
		Finally, for any squarefree quadratic monomial ideal, we have
		$$
		\reg(I(G))=\reg(R/I(G))+1.
		$$
		And the second formula follows.
	\end{proof}
	
	 \medskip The next corollary translates the regularity formula into a sharp criterion for $2$-linear resolution.
	
	\begin{corollary}\label{cor:2linear}
		Let $G$ be either $\MM(a;\bk{b})$ or $\CC(a;\bk{b})$. Then the following are equivalent.
		
		\begin{enumerate}[label=\textup{(\roman*)},noitemsep]
			\item $I(G)$ has a $2$-linear resolution.
			\item $\reg(I(G))=2$.
			\item $\reg(G)=1$.
			\item $\nu\leq 1$, that is, at most one block has size at least $2$.
		\end{enumerate}
	\end{corollary}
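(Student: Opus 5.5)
The plan is to prove the cycle of equivalences (i)$\Leftrightarrow$(ii)$\Leftrightarrow$(iii)$\Leftrightarrow$(iv), using Theorem~\ref{thm:regularity} as the main input. Fröberg's theorem (Theorem~\ref{thm:Froberg}) can serve as an independent cross-check of (i)$\Leftrightarrow$(iv).

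\textbf{Step 1: (i)$\Leftrightarrow$(ii).} Since $I(G)$ is generated in degree $2$ and $G$ has at least one edge, $\reg(I(G))\geq 2$ always. A $2$-linear resolution means $\beta_{i,j}(I(G))=0$ unless $j=i+2$. This is exactly the statement $\max\{j-i\}=2$, i.e.\ $\reg(I(G))=2$. Nonemptiness of $E(G)$ follows because $a\geq 1$, $m\geq 1$, and every vertex of $A$ is joined to every block vertex.

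\textbf{Step 2: (ii)$\Leftrightarrow$(iii).} Use $\reg(I(G))=\reg(R/I(G))+1$, already invoked at the end of the proof of Theorem~\ref{thm:regularity}.

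\textbf{Step 3: (iii)$\Leftrightarrow$(iv).} By Theorem~\ref{thm:regularity}, $\reg(G)=\max\{1,\nu\}$, and this equals $1$ exactly when $\nu\leq 1$.

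The main care point is Theorem~\ref{thm:regularity} itself in the boundary case $\nu\geq 2$. There one needs a genuinely nonzero Betti number on the strand $t=\nu$. I would verify this directly by taking $W$ to be the union of the $\nu$ nontrivial blocks. Then $\ind(G)_W\cong J(b_{r_1},\dots,b_{r_\nu})$, and Lemma~\ref{lem:block-homology}(ii) gives $\wt H_{\nu-1}\neq 0$. Hochster's formula then yields $\beta_{\sigma-\nu,\sigma}\neq 0$, with no $A$-vertices needed.

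As a sanity check via Theorem~\ref{thm:Froberg}, note that $\overline{G}$ is a disjoint union of the complement of $A$ with the complete multipartite graph $K_{b_1,\dots,b_m}$. The latter is chordal iff it contains no induced $C_4$. An induced $C_4$ requires two parts of size at least $2$, i.e.\ $\nu\geq 2$. The $A$-part of $\overline{G}$ is $K_a$ for $\MM$ and $\overline{K}_a$ for $\CC$, both chordal. This confirms (i)$\Leftrightarrow$(iv).
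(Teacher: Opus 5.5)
Your proposal is correct and follows the same route as the paper. The paper likewise treats (i)$\Leftrightarrow$(ii)$\Leftrightarrow$(iii) as standard for quadratic monomial ideals; your Step~1 tacitly uses the standard fact that $\beta_{i,j}(I(G))=0$ for $j<i+2$. It also obtains (iii)$\Leftrightarrow$(iv) directly from Theorem~\ref{thm:regularity}. Your explicit witness $\beta_{\sigma-\nu,\sigma}\neq 0$ from $W=\bigcup_{b_r\geq 2}B_r$ and the Fr\"oberg cross-check via $\overline{G}=K_a\sqcup K_{b_1,\dots,b_m}$ (resp.\ $\overline{K}_a\sqcup K_{b_1,\dots,b_m}$) are both valid supplements but not needed for the argument.
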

	
	\begin{proof}
		The equivalence of (i), (ii), and (iii) is standard for quadratic monomial ideals. The equivalence with (iv) follows immediately from Theorem~\ref{thm:regularity}.
	\end{proof}
	
	 \medskip The next theorem identifies the regularity corner explicitly and shows where the top nonzero strand begins.
	
	\begin{theorem}\label{thm:reg-corner}
		Assume $\nu\geq 2$, and let $ \sigma=\sum_{b_r\geq 2} b_r. $
		Then for both $G=\MM(a;\bk{b})$ and $G=\CC(a;\bk{b})$,
		$$
		\beta_{a+\sigma-\nu,\ a+\sigma}(G)=\prod_{b_r\geq 2}(b_r-1).
		$$
		Moreover, this Betti number is extremal and realizes the regularity.
	\end{theorem}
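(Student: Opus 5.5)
The plan is to read off $\beta_{a+\sigma-\nu,\,a+\sigma}(G)$ from the coefficient formula of Theorem~\ref{thm:split-higher} (respectively Theorem~\ref{thm:clique-higher}) with $t=\nu$ and $j=a+\sigma$. We then check extremality by a degree argument on the generating function of Corollary~\ref{cor:split-gen} (respectively Corollary~\ref{cor:clique-gen}).

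\textbf{Step 1: the value.} Set $t=\nu$ and $j=a+\sigma$. Note that $j\ge 2t$, since $\sigma\ge 2\nu$. We need the coefficient
$$[z^{j-u}]\,\EE_\nu(z;\bk{b}), \qquad 0\le u\le a.$$
Each block polynomial has $P_r=0$ when $b_r=1$, and otherwise $\deg P_r=b_r$ with leading coefficient $(b_r-1)\binom{b_r}{b_r}=b_r-1$. Hence $\EE_\nu$ reduces to the single product $\prod_{b_r\ge2}P_r$. This product has degree exactly $\sigma$ and leading coefficient $\prod_{b_r\ge2}(b_r-1)$. Since $j-u=a+\sigma-u\ge\sigma$, with equality only at $u=a$, the only surviving summand is $\binom{a}{a}[z^\sigma]\prod P_r$. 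This gives the stated product, and the same computation applies verbatim to $\CC(a;\bk{b})$.

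\textbf{Step 2: the regularity.} By Theorem~\ref{thm:regularity}, $\reg(G)=\nu$, so this Betti number lies on the top strand $j-i=\nu$ and realizes the regularity.

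\textbf{Step 3: extremality.} Here extremal means that $\beta_{i',j'}=0$ whenever $i'\ge i$, $j'-i'\ge j-i$, and $(i',j')\ne(i,j)$. Strands with $j'-i'>\nu$ vanish by Theorem~\ref{thm:regularity}. On the strand $j'-i'=\nu$, Corollary~\ref{cor:split-gen} gives the generating function $(1+\lambda)^a\EE_\nu(\lambda;\bk{b})$, whose degree is $a+\sigma$. So no Betti number on this strand occurs with $j'>a+\sigma$, equivalently with $i'>a+\sigma-\nu$. Together these show the corner condition.

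\textbf{Main obstacle.} The only delicate point is the degenerate bookkeeping in Step 1: the blocks with $b_r=1$ must kill every $\nu$-subset other than the one consisting of the nontrivial blocks. This is immediate from $P_r\equiv 0$ when $b_r=1$, since the sum defining $P_r$ is empty. Beyond that, I expect no difficulty.
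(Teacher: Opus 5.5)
Your proposal is correct and follows the paper's proof: both read off the corner entry from the higher-strand coefficient formula with $t=\nu$ and $j=a+\sigma$, and both deduce extremality from the vanishing of strands beyond $\nu$ together with the fact that $a+\sigma$ is the largest internal degree on the $\nu$-th strand. Your version is slightly more careful than the paper's. You verify that $\EE_\nu$ collapses to $\prod_{b_r\ge 2}P_r$ and that only the $u=a$ summand survives, whereas the paper exhibits this contribution without arguing that it is the only one.
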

	
	\begin{proof}
		In Theorems~\ref{thm:split-higher} and \ref{thm:clique-higher}, take $t=\nu$, choose all $\nu$ nontrivial blocks, and choose all their vertices. This contributes degree $\sigma$ in the block polynomial, with coefficient $\prod_{b_r\geq 2}(b_r-1)$. Choosing all $a$ distinguished vertices, we have
		$$
		\beta_{a+\sigma-\nu,\ a+\sigma}(G)=\prod_{b_r\geq 2}(b_r-1).
		$$
		Since $\nu$ is the largest possible value of $j-i$, no Betti number with larger difference can be nonzero. Also $a+\sigma$ is the largest internal degree in the $\nu$-th strand, since one cannot choose more than all $a$ distinguished vertices and all vertices of the nontrivial blocks. Hence, the displayed Betti number lies in the north-east corner of the nonzero region of the Betti table, and is therefore extremal. By Theorem~\ref{thm:regularity}, its difference equals the regularity.
	\end{proof}
	
	 \medskip The next theorem determines the last-column extremal entry and the projective dimension.
	
	\begin{theorem}\label{thm:pd-last}
		Let $G$ be either $\MM(a;\bk{b})$ or $\CC(a;\bk{b})$ on $N=a+\sum_{r=1}^m b_r$ vertices. Then $\pd(G)=N-1$. More precisely, the following hold.
		
		\begin{enumerate}[label=\textup{(\roman*)}]
			\item If $G=\MM(a;\bk{b})$ and $m\geq 2$, then $ \beta_{N-1,N}(G)=1. $
			If $m=1$, then $ \beta_{N-1,N}(G)=b_1. $
			
			\item If $G=\CC(a;\bk{b})$ and $m\geq 2$, then $ \beta_{N-1,N}(G)=a. $
			If $m=1$, then 	$ \beta_{N-1,N}(G)=a+b_1-1. $
		\end{enumerate}
	\end{theorem}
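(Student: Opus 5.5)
The plan is to read off both claims directly from Hochster's formula (Theorem~\ref{thm:Hochster}) applied with $W$ the full vertex set. First I will show that no Betti number lies beyond homological degree $N-1$. A Betti number $\beta_{i,j}(G)$ can be nonzero only if some $W\subseteq V(G)$ with $|W|=j$ has $\wt H_{j-i-1}(\ind(G)_W;\kk)\neq 0$. This forces $j\leq N$. Reduced homology in degree $j-i-1<-1$ always vanishes. Reduced homology in degree $-1$ is nonzero only for the void-vertex complex, i.e.\ $W=\emptyset$, which gives $i=j=0$. Hence every nonzero $\beta_{i,j}(G)$ with $i\geq 1$ satisfies $i\leq j-1\leq N-1$, and so $\pd(G)\leq N-1$.

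Next I will show that $\beta_{N-1,N}(G)\neq 0$ by computing it exactly. The only $N$-subset of $V(G)$ is $W=V(G)$, so Hochster's formula reduces to
$$
\beta_{N-1,N}(G)=\dim_{\kk}\wt H_0(\ind(G);\kk),
$$
which is the number of connected components of $\ind(G)$ minus one. I will then use Proposition~\ref{prop:decomp} to count components. The simplex $\langle A\rangle$ is connected because $a\geq 1$. The discrete complex $D_s$ has $s$ components. For $m\geq 2$, the join $D_{b_1}*\cdots*D_{b_m}$ is connected, being a join of at least two nonempty complexes; any two vertices either lie in different factors and span an edge, or lie in the same factor and are both joined to a vertex of another factor. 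For $m=1$, this join is just $D_{b_1}$.

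This gives four cases:
\begin{itemize}[noitemsep]
\item For $\MM$ with $m\geq 2$, $\ind(G)=\langle A\rangle\sqcup J$ has $2$ components, so $\beta_{N-1,N}(G)=1$.
\item For $\MM$ with $m=1$, $\ind(G)=\langle A\rangle\sqcup D_{b_1}$ has $1+b_1$ components, so $\beta_{N-1,N}(G)=b_1$.
\item For $\CC$ with $m\geq 2$, $\ind(G)=D_a\sqcup J$ has $a+1$ components, so $\beta_{N-1,N}(G)=a$.
\item For $\CC$ with $m=1$, $\ind(G)=D_a\sqcup D_{b_1}$ has $a+b_1$ components, so $\beta_{N-1,N}(G)=a+b_1-1$.
\end{itemize}
Since $a,b_1\geq 1$, all four values are positive, so $\pd(G)=N-1$.

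As a consistency check, I will compare these values with the specializations $j=N$ of Theorems~\ref{thm:split-linear} and \ref{thm:clique-linear}. For $m\geq 2$, only the $M$-terms survive at $j=N$, giving $M_B(\bk{b})=1$ with coefficient $1$, respectively $a$. This matches the entries $1$ and $2$ in Table~\ref{tab:split-clique-compare}. There is no real obstacle in this argument. The only point needing care is the connectedness of the join for $m\geq 2$, together with the degenerate case $m=1$, where $M_s(\bk{b})$ vanishes identically and the single-block terms carry the count instead.
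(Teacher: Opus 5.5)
Your proposal is correct and takes essentially the same route as the paper: Hochster's formula with $W=V(G)$ reduces $\beta_{N-1,N}(G)$ to the number of connected components of $\ind(G)$ minus one, and Proposition~\ref{prop:decomp} gives the count in the four cases. Your version also makes explicit two points the paper leaves implicit, namely the upper bound $\pd(G)\le N-1$ via Hochster's formula and the connectedness of the join for $m\geq 2$.
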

	
	\begin{proof}
		By Hochster's formula, we have
		$$
		\beta_{N-1,N}(G)=\dim_{\kk}\wt H_0(\ind(G);\kk)=\comp(\ind(G))-1.
		$$
		Thus the problem reduces to counting connected components of the full independence complex.
		 For $G=\MM(a;\bk{b})$, Proposition~\ref{prop:decomp} implies that
		$$
		\ind(G)=\langle A\rangle \sqcup (D_{b_1}*\cdots *D_{b_m}).
		$$
		If $m\geq 2$, the join component is connected, so $\ind(G)$ has exactly two connected components, whence $\beta_{N-1,N}(G)=1$. If $m=1$, then the join component is just $D_{b_1}$, so the total number of components is $b_1+1$, thereby giving $\beta_{N-1,N}(G)=b_1$.
		 For $G=\CC(a;\bk{b})$, Proposition~\ref{prop:decomp} implies us
		$$
		\ind(G)=D_a \sqcup (D_{b_1}*\cdots *D_{b_m}).
		$$
		If $m\geq 2$, the join component is connected, so $\ind(G)$ has $a+1$ connected components, whence $\beta_{N-1,N}(G)=a$. If $m=1$, then the second component is $D_{b_1}$, so the total number of components is $a+b_1$, and $\beta_{N-1,N}(G)=a+b_1-1$. In every case $\beta_{N-1,N}(G)\neq 0$, so the projective dimension equals $N-1$.
	\end{proof}
	
	 \medskip The next proposition shows that the Betti numbers are monotone with respect to enlarging blocks.
	
	\begin{proposition}\label{prop:monotone}
		Fix $a\geq 1$ and let $\bk{b}=(b_1,\dots,b_m)$ and $\bk{c}=(c_1,\dots,c_m)$ satisfy $b_r\leq c_r$ for every $r$. Then for every bidegree $(i,j)$ and for both graph families,
		$$
		\beta_{i,j}(\MM(a;\bk{b}))\leq \beta_{i,j}(\MM(a;\bk{c})),
		\qquad
		\beta_{i,j}(\CC(a;\bk{b}))\leq \beta_{i,j}(\CC(a;\bk{c})).
		$$
	\end{proposition}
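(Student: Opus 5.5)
The plan is to avoid the explicit formulas of Theorems~\ref{thm:split-linear}, \ref{thm:split-higher}, \ref{thm:clique-linear} and \ref{thm:clique-higher} and argue structurally through Hochster's formula (Theorem~\ref{thm:Hochster}). The key observation will be that the smaller graph sits inside the larger one as an induced subgraph. Since $b_r\leq c_r$ for every $r$, choose for each $r$ a subset $B_r'\subseteq C_r$ of size $b_r$, where $C_r$ is the vertex set of the $r$-th block $K_{c_r}$ of the larger graph. Keep the distinguished part $A$ fixed, and set $V'=A\sqcup B_1'\sqcup\cdots\sqcup B_m'$. I will then check directly from the definitions that the induced subgraph $\MM(a;\bk{c})[V']$ is isomorphic to $\MM(a;\bk{b})$, and likewise that $\CC(a;\bk{c})[V']\cong\CC(a;\bk{b})$. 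The reason is that an induced subgraph of a clique is a clique, and the join and disjoint-union relations between $A$ and the blocks, and among the blocks, are preserved by restriction.

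Next I will use the standard fact that independence complexes commute with taking induced subgraphs. For $H=G[V']$ and any $W\subseteq V'$ we have $\ind(H)_W=\ind(G[W])=\ind(G)_W$, because a subset of $W$ is independent in $H$ exactly when it is independent in $G$. This can also be read off from Proposition~\ref{prop:induced-models}: the model for $\ind(G)_W$ depends only on $u=|W\cap A|$ and the numbers $w_r=|W\cap C_r|$, and for $W\subseteq V'$ these agree with the data computed in the smaller graph.

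Now apply Theorem~\ref{thm:Hochster} to both graphs. This gives
$$
\beta_{i,j}(H)=\sum_{\substack{W\subseteq V'\\ |W|=j}}\dim_{\kk}\wt H_{j-i-1}(\ind(G)_W;\kk)\;\leq\;\sum_{\substack{W\subseteq V(G)\\ |W|=j}}\dim_{\kk}\wt H_{j-i-1}(\ind(G)_W;\kk)=\beta_{i,j}(G).
$$
The inequality holds because the left sum is a subsum of the right sum and every term is a nonnegative dimension. Taking $G=\MM(a;\bk{c})$ and then $G=\CC(a;\bk{c})$ yields both inequalities simultaneously, for every bidegree $(i,j)$, including the linear strand and all higher strands.

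I expect no serious obstacle. The only point needing care is to verify the induced-subgraph identification precisely for both families, in particular that the $A$-part stays independent in $\MM$ and stays a clique in $\CC$ under restriction. As a consistency check on the higher strands, I will also note that each coefficient of $P_r(z)$ is $(q-1)\binom{b_r}{q}$, which is nondecreasing in $b_r$, and that $P_r$ gains new nonnegative terms as $b_r$ grows. Hence $(1+\lambda)^a\EE_t(\lambda;\bk{b})$ is coefficientwise dominated by $(1+\lambda)^a\EE_t(\lambda;\bk{c})$ via Corollaries~\ref{cor:split-gen} and \ref{cor:clique-gen}, which agrees with the structural argument.
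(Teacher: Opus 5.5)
Your proof is correct, but it takes a different route from the paper. The paper argues formula by formula: it asserts that every Betti number in Sections~\ref{sec:split} and \ref{sec:clique} is a sum of nonnegative multiples of binomial coefficients in the $b_r$, each nondecreasing in $b_r$, so replacing $b_r$ by $c_r\geq b_r$ can only increase them. You never touch those formulas. Instead, you realize $\MM(a;\bk{b})$ and $\CC(a;\bk{b})$ as induced subgraphs of $\MM(a;\bk{c})$ and $\CC(a;\bk{c})$. Then the Hochster sum for the smaller graph is a subsum of the Hochster sum for the larger one; this is the standard restriction lemma for induced subgraphs.

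Your route gains three things.
\begin{enumerate}[label=\textup{(\roman*)},noitemsep]
\item It does not depend on the correctness of Theorems~\ref{thm:split-linear}--\ref{thm:clique-higher}.
\item It proves more. The same embedding also gives monotonicity when $a$ grows (take $a$ of the distinguished vertices) and when the number $m$ of blocks grows (take a subset of the blocks).
\item It avoids a soft spot in the paper's one-line justification. The linear-strand formulas contain $M_{j-u}(\bk{b})=\binom{B}{j-u}-\sum_r\binom{b_r}{j-u}$, which is not a nonnegative combination of binomials because of the minus signs. Its monotonicity needs a further observation: $M_s$ counts $s$-subsets of the blocks that meet at least two blocks, and such subsets stay such under the inclusions $B_r'\subseteq C_r$. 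That observation is essentially your embedding argument.
\end{enumerate}

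What the formula-based route offers in exchange is quantitative information about how much each strand grows. You already recover this for the higher strands in your consistency check, through the coefficientwise domination of $(1+\lambda)^a\EE_t(\lambda;\bk{b})$ by $(1+\lambda)^a\EE_t(\lambda;\bk{c})$ from Corollaries~\ref{cor:split-gen} and \ref{cor:clique-gen}.
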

	
	\begin{proof}
		Every formula in Sections~\ref{sec:split} and \ref{sec:clique} is a sum of nonnegative multiples of binomial coefficients in the variables $b_r$. Replacing $b_r$ by a larger $c_r$ can only increase these coefficients. Therefore, each graded Betti number is monotone under coordinatewise enlargement of the block sizes.
	\end{proof}
	
	\medskip
	
	\begin{example}\label{ex:equal-block-recovery}
		Consider the equal-block examples $ \MM(3;(3,3,3,3,3))$ and $	\CC(3;(3,3,3,3,3)). $
		Here $N=18$, $\nu=5$, and $\sigma=15$. By Theorem~\ref{thm:regularity}, we obtain 
		$ \reg(R/I)=5 $  and $\reg(I)=6. $ By Theorem~\ref{thm:reg-corner}, we have  $ \beta_{13,18}=2^5=32, $ and by Theorem~\ref{thm:pd-last}, we have
		$$
		\beta_{17,18}(\MM(3;(3,3,3,3,3)))=1,
		\qquad
		\beta_{17,18}(\CC(3;(3,3,3,3,3)))=3.
		$$
		These values agree with the published Macaulay2 tables in \cite{Rather2024}. The same comparison also recovers
		$ \beta_{1,2}=60,\ \beta_{2,3}=415,\ \beta_{3,4}=1770 $
		for the split-like graph and
		$ \beta_{1,2}=63,\ \beta_{2,3}=462,\ \beta_{3,4}=2115 $
		for the clique-star graph.
	\end{example}
	
	\begin{figure}[H]
		\centering
		\begin{tikzpicture}[x=0.85cm,y=0.65cm]
			
			\foreach \x in {0,...,8}{
				\foreach \y in {0,...,6}{
					\fill[white] (\x,\y) rectangle (\x+1,\y+1);
				}
			}
			
			\fill[blue!22]  (1,0) rectangle (9,1);
			\fill[green!20] (3,1) rectangle (9,2);
			\fill[green!32] (5,2) rectangle (9,3);
			
			\fill[orange!70] (8,0) rectangle (9,1);
			\fill[red!60]    (8,2) rectangle (9,3);
			
			\draw[step=1,gray!45,thin] (0,0) grid (9,7);
			\draw[thick] (0,0) rectangle (9,7);
			
			\node[font=\small] at (4.5,7.45) {Schematic Betti-table region};
			
			\node[left] at (0,0.5) {$1$};
			\node[left] at (0,1.5) {$2$};
			\node[left] at (0,2.5) {$3$};
			\node[left] at (0,3.5) {$4$};
			\node[left] at (0,4.5) {$5$};
			\node[left] at (0,5.5) {$6$};
			\node[left] at (0,6.5) {$7$};
			
			\node[below] at (0.5,0) {$0$};
			\node[below] at (1.5,0) {$1$};
			\node[below] at (2.5,0) {$2$};
			\node[below] at (3.5,0) {$3$};
			\node[below] at (4.5,0) {$4$};
			\node[below] at (5.5,0) {$5$};
			\node[below] at (6.5,0) {$6$};
			\node[below] at (7.5,0) {$7$};
			\node[below] at (8.5,0) {$8$};
			
			\node[below=8pt] at (4.5,0) {\small homological degree $i$};
			\node[rotate=90] at (-0.9,3.5) {\small internal degree $j-i$};
			
			\node[orange!90!black,font=\scriptsize,align=center] at (2.5,0.5)
			{$\beta_{N-1,N}$};
			
			\node[red!90!black,font=\scriptsize,align=center] at (2.5,2.5)
			{$\beta_{a+\sigma-\nu,\;a+\sigma}$};
			
		\end{tikzpicture}
		\caption{The two relevant extremal corners, the regularity corner from Theorem~\ref{thm:reg-corner} and the last-column corner from Theorem~\ref{thm:pd-last}.}
		\label{fig:extremal-corners}
	\end{figure}
	Figure \ref{fig:extremal-corners} shows regularity corner for Theorems~\ref{thm:reg-corner} and ~\ref{thm:pd-last}. Table \ref{tab:published-compare} gives the formulas recovered (from \cite{Rather2024}) as special  cases of our results.
	
	\begin{table}[H]
		\centering
		\caption{Published equal-block values recovered by the present formulas.}
		\label{tab:published-compare}
		\begin{tabular}{@{}llll@{}}
			\toprule
			Graph & Invariant from this paper & Value & Published source \\ \midrule
			$\MM(3;(3,3,3,3,3))$ & $\beta_{1,2},\beta_{2,3},\beta_{3,4}$ & $60,415,1770$ & \cite{Rather2024} \\
			$\MM(3;(3,3,3,3,3))$ & $\beta_{13,18},\beta_{17,18}$ & $32,1$ & \cite{Rather2024} \\
			$\CC(3;(3,3,3,3,3))$ & $\beta_{1,2},\beta_{2,3},\beta_{3,4}$ & $63,462,2115$ & \cite{Rather2024} \\
			$\CC(3;(3,3,3,3,3))$ & $\beta_{13,18},\beta_{17,18}$ & $32,3$ & \cite{Rather2024} \\ \bottomrule
		\end{tabular}
	\end{table}

	Theorem~\ref{thm:regularity} and Theorem~\ref{thm:reg-corner} are general results for the homological invariants of $\MM(a;\bk{b})$ and $\CC(a;\bk{b})$. Earlier work located only last-column extremal entries in special equal-block situations \cite{Rather2024}. Here the regularity corner is identified for arbitrary block sizes, and Corollary~\ref{cor:2linear} gives a complete and sharp criterion for $2$-linear resolution in these families.
	
	\section{Applications to algebraic graphs from groups}\label{sec:groups}
	We now illustrate the general formulas on graph classes attached to finite groups. We begin with pineapple graphs, because prime-power dihedral power graphs fall into that class.
	
	 Let $P_m^r$ denote the pineapple graph obtained from a clique $K_m$ by attaching $r$ pendant vertices to one distinguished vertex of the clique.
	
	\medskip The next theorem computes the entire Betti table of the pineapple graph, not just its linear strand.
	
	\begin{theorem}\label{thm:pineapple}
		Let $G=P_m^r$ with $m\geq 2$ and $r\geq 1$. Then all nonzero graded Betti numbers lie on the linear strand, and for every $j\geq 2$,
		$$
		\beta_{j-1,j}(G)
		= (j-1)\binom{m}{j}
		+
		\binom{r}{j-1}
		+
		\sum_{\substack{u+v=j-1\\ u,v\geq 1}}
		\binom{m-1}{u}\binom{r}{v}.
		$$
		Moreover,
		$ \beta_{i,j}(G)=0$ for $j-i\geq 2, $ $ \reg(G)=1,  \reg(I(G))=2,  \pd(G)=m+r-1, $
		and
		$
		\beta_{m+r-1,m+r}(G)=1
		$
		is the unique extremal Betti number.
	\end{theorem}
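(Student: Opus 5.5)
The plan is to apply Hochster's formula (Theorem~\ref{thm:Hochster}) directly, after describing $\ind(P_m^r)$ in the same spirit as Proposition~\ref{prop:decomp}. Write $v$ for the distinguished clique vertex, $C'$ for the remaining $m-1$ clique vertices, and $P$ for the $r$ pendant vertices. Since $v$ is adjacent to every other vertex, no independent set containing $v$ has size at least $2$. Among the other vertices, $C'$ is a clique, $P$ is independent, and there are no edges between $C'$ and $P$. Hence
$$
\ind(P_m^r)=\{v\}\sqcup\big(D_{m-1}*\langle P\rangle\big).
$$

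\textbf{Step 1: induced subcomplexes.} For $W\subseteq V(G)$ put $c=|W\cap C'|$ and $p=|W\cap P|$. Restricting the decomposition to $W$ gives the following cases.
\begin{enumerate}[label=\textup{(\alph*)},noitemsep]
\item If $v\notin W$ and $p>0$, then $\ind(G)_W=D_c*\langle W\cap P\rangle$ is a cone (a simplex when $c=0$), hence acyclic.
\item If $v\notin W$ and $p=0$, then $\ind(G)_W=D_c$.
\item If $v\in W$, then $\ind(G)_W=\{v\}\sqcup X$, where $X$ is the complex from (a) or (b).
\end{enumerate}
In every case each connected component is either acyclic or a point. Therefore $\wt H_q(\ind(G)_W)=0$ for all $q\geq 1$, so $\beta_{i,j}(G)=0$ whenever $j-i\geq 2$. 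This already gives $\reg(G)=1$, hence $\reg(I(G))=2$. As a cross-check, Fröberg's criterion (Theorem~\ref{thm:Froberg}) applies, since $\overline{G}$ is $K_{1}\sqcup\big(\overline{K}_{m-1}*K_r\big)$, which one can check is chordal.

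\textbf{Step 2: the linear strand.} Only $\dim\wt H_0$, i.e. the number of components minus one, matters, and I sum it over $j$-subsets $W$.
\begin{enumerate}[label=\textup{(\alph*)},noitemsep]
\item $v\notin W$, $p=0$: each such $W$ contributes $j-1$, giving $(j-1)\binom{m-1}{j}$.
\item $v\in W$, $p=0$, $c=j-1\geq1$: there are $1+(j-1)$ components, so each contributes $j-1$, giving $(j-1)\binom{m-1}{j-1}$.
\item $v\in W$, $p\geq1$: there are exactly two components, so each contributes $1$. With $c=u$ and $p$ playing the role of $v$ in the formula, $c=0$ gives $\binom{r}{j-1}$ and $c\geq1$ gives $\sum_{u+v=j-1,\,u,v\geq1}\binom{m-1}{u}\binom{r}{v}$.
\end{enumerate}
Pascal's rule merges (a) and (b) into $(j-1)\binom{m}{j}$, which yields the stated formula.

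\textbf{Step 3: the last column and extremality.} Taking $W=V(G)$ gives
$$
\beta_{N-1,N}=\comp(\ind(G))-1=1,
$$
because $D_{m-1}*\langle P\rangle$ is connected when $r\geq1$. Since $\beta_{i,j}=0$ for $j>N$, this shows $\pd(G)=m+r-1$. Because the whole table is concentrated in the single strand $j-i=1$, a Betti number $\beta_{i,i+1}\neq0$ is extremal exactly when no nonzero $\beta_{k,k+1}$ with $k>i$ exists. So the unique extremal one is $\beta_{\pd,\pd+1}=\beta_{m+r-1,m+r}=1$.

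The only delicate point is the bookkeeping in Step 2: I must make sure that the cases with $c=0$ or $p=0$ are neither double-counted nor dropped, and that the Pascal merge of (a) and (b) is valid. The homological vanishing in Step 1 is immediate from the cone argument of Lemma~\ref{lem:block-homology}(i).
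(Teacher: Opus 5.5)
Your proof is correct and is essentially the paper's argument. Like the paper, you apply Hochster's formula to $\ind(P_m^r)=\{v_0\}\sqcup(D_{m-1}*\langle L\rangle)$, kill higher homology because every component of an induced subcomplex is a cone or a point, and count components both for the linear strand and for $W=V(G)$; the only cosmetic difference is that the paper treats all $W\subseteq K_m$ in one case to get $(j-1)\binom{m}{j}$ directly, while you split on whether the distinguished vertex lies in $W$ and recombine with Pascal's rule.
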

	
	\begin{proof}
		Let $v_0$ be the distinguished clique vertex, let $C'=\{u_1,\dots,u_{m-1}\}$ be the remaining clique vertices, and let $L=\{\ell_1,\dots,\ell_r\}$ be the leaves. Every induced subcomplex of $\ind(G)$ is a disjoint union of contractible pieces and isolated vertices, so higher reduced homology vanishes. Thus all nonzero Betti numbers lie on the linear strand.
		
		To compute $\beta_{j-1,j}(G)$, use Hochster's formula and count disconnected induced subcomplexes on $j$ vertices.
		
		\smallskip
		
		\noindent\textbf{Case 1:} If $W$ lies entirely in the clique $K_m={v_0}\cup C'$. Then $\ind(G)_W\cong D_j$, contributing $(j-1)\binom{m}{j}$.
		
		\smallskip
		
		\noindent\textbf{Case 2:} If $W=\{v_0\}\cup T$ with $T\subseteq L$ and $|T|=j-1$. Then
		$$
		\ind(G)_W\cong {v_0}\sqcup \langle T\rangle,
		$$
		so $\dim_{\kk}\wt H_0(\ind(G)_W;\kk)=1$, and its contribution is $\binom{r}{j-1}$.
		
		\smallskip
		
		\noindent\textbf{Case 3:} If $W=\{v_0\}\cup U\cup T$, where $U\subseteq C'$, $T\subseteq L$, $u=|U|\geq 1$, $v=|T|\geq 1$, and $u+v=j-1$. Then $\ind(G)_W$ is the disjoint union of the isolated vertex ${v_0}$ and a contractible component on $U\cup T$, and  hence $\dim_{\kk}\wt H_0(\ind(G)_W;\kk)=1$. The number of such subsets is $\binom{m-1}{u}\binom{r}{v}$, and summing over $u+v=j-1$ gives the third term.
		
		\smallskip
		
		All other subsets give connected induced subcomplexes, and hence contribute zero. This proves the linear formula. Since there are no higher strands, $\reg(G)=1$ and $\reg(I(G))=2$. The full independence complex has exactly two connected components, namely the isolated facet $\{v_0\}$ and the contractible component on $C'\cup L$. Therefore, we obtain 
		$ \beta_{m+r-1,m+r}(G)=\comp(\ind(G))-1=1, $
		which is extremal, and $\pd(G)=m+r-1$.
	\end{proof}
	
	\medskip
	
	Recall that the undirected power graph $P(\Gamma)$ of a finite group $\Gamma$ has vertex set $\Gamma$ and two distinct vertices adjacent whenever one is a positive power of the other \cite{ChakrabartyGhoshSen2009,AbawajyKelarevChowdhury2013}.

	An important class comes from elementary abelian groups. If $\Gamma$ has order $p^z$ and every nonidentity element has order $p$, then the power graph is a join of one distinguished vertex with many copies of $K_{p-1}$, see \cite{ChelvamSattanathan2013,Rather2024}.
	
	\medskip The next corollary gives a complete homogeneous formula for elementary abelian groups.
	
	\begin{corollary}\label{cor:elementary}
		Let $\Gamma$ be an elementary abelian group of order $p^z$, and let
		$ \ell=\frac{p^z-1}{p-1}. $
		Then
		$ P(\Gamma)\cong \CC(1;(\underbrace{p-1,\dots,p-1}_{\ell\text{ times}})),$ and for   $j\geq 2$,
		$$
		\beta_{j-1,j}(P(\Gamma))
		= \ell (j-1)\binom{p-1}{j}
		+
		\ell (j-1)\binom{p-1}{j-1}
		+
		\binom{\ell(p-1)}{j-1}
		- \ell \binom{p-1}{j-1},
		$$
		and for $t\geq 2$,
		$$
		\beta_{j-t,j}(P(\Gamma))
		= \binom{\ell}{t}[z^j]P_{p-1}(z)^t
		+
		\binom{\ell}{t}[z^{j-1}]P_{p-1}(z)^t,
		$$
		where
		$$
		P_{p-1}(z)=\sum_{q=2}^{p-1}(q-1)\binom{p-1}{q}z^q.
		$$
		In particular,
		$ \reg(R/I(P(\Gamma)))=\max\{1,\ell\},$ and $
		\pd(R/I(P(\Gamma)))=p^z-1. $
	\end{corollary}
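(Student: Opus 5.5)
The plan is to identify $P(\Gamma)$ with a generalized clique-star graph and then specialize the general results. This means Theorems~\ref{thm:clique-linear} and~\ref{thm:clique-higher} for the Betti numbers, and Theorems~\ref{thm:regularity} and~\ref{thm:pd-last} for the invariants.

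\textbf{Step 1: the graph isomorphism.} Every nonidentity $x\in\Gamma$ has order $p$, so $\langle x\rangle$ is a subgroup of order $p$. Two distinct subgroups of order $p$ meet trivially. Counting nonidentity elements, there are exactly $\ell=(p^z-1)/(p-1)$ such subgroups, each containing $p-1$ nonidentity elements.

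For distinct nonidentity $x,y$, the element $y$ is a power of $x$ if and only if $\langle y\rangle=\langle x\rangle$. Hence adjacency among nonidentity elements is exactly "lying in the same subgroup of order $p$". This gives $\ell$ disjoint cliques $K_{p-1}$ with no edges between them. The identity is a power of every element, so it is adjacent to everything. Therefore $P(\Gamma)\cong K_1*\bigsqcup_{r=1}^{\ell}K_{p-1}=\CC(1;(p-1,\dots,p-1))$, with $N=1+\ell(p-1)=p^z$.

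\textbf{Step 2: the linear strand.} Substitute $a=1$, $m=\ell$ and $b_r=p-1$ into Theorem~\ref{thm:clique-linear}; the four terms reduce as follows.
\begin{itemize}[noitemsep]
\item The term $(j-1)\binom{1}{j}$ vanishes for $j\geq 2$.
\item The single-block term becomes $\ell(j-1)\binom{p-1}{j}$.
\item The mixed one-block term has only $u=1$, giving $\ell(j-1)\binom{p-1}{j-1}$.
\item The last term has only $u=1$ (present only when $j\geq 3$) and equals $M_{j-1}=\binom{\ell(p-1)}{j-1}-\ell\binom{p-1}{j-1}$.
\end{itemize}
For $j=2$ the last expression is $M_1=\ell(p-1)-\ell(p-1)=0$. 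So the displayed formula holds uniformly for all $j\geq 2$ and no case split is needed.

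\textbf{Step 3: the higher strands.} Use Theorem~\ref{thm:clique-higher}, or the equal-block Corollary~\ref{cor:clique-equal}, with $\EE_t=\binom{\ell}{t}P_{p-1}(z)^t$. Since $a=1$, only $u=0,1$ occur, giving $\binom{\ell}{t}\bigl([z^j]P_{p-1}^t+[z^{j-1}]P_{p-1}^t\bigr)$.

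The restriction $u\leq j-2t$ in the theorem can be dropped. When $j-1<2t$, the coefficient $[z^{j-1}]P_{p-1}^t$ is automatically zero because $P_{p-1}$ has minimal degree $2$.

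\textbf{Step 4: the invariants.} By Theorem~\ref{thm:pd-last}, $\pd=N-1=p^z-1$ regardless of $m$.

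For the regularity, Theorem~\ref{thm:regularity} gives $\max\{1,\nu\}$, where $\nu=\ell$ when $p-1\geq 2$, that is, when $p$ is odd. The main subtlety I expect is the case $p=2$. There all blocks are single vertices, so $P_{1}\equiv 0$ and $\nu=0$. In fact $P(\Gamma)$ is then a star, and its regularity is $1$, not $\max\{1,\ell\}=2^z-1$.

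I would therefore state the regularity claim for odd $p$, and remark that for $p=2$ one gets $\reg=1$. The Betti formulas of Steps~2 and~3 remain valid for $p=2$, with all higher strands vanishing. Everything else is direct substitution.
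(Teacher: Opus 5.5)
Your argument is the paper's own: identify $P(\Gamma)\cong \CC(1;(p-1,\dots,p-1))$ and specialize Theorems~\ref{thm:clique-linear}, \ref{thm:clique-higher}, \ref{thm:regularity} and \ref{thm:pd-last} with $a=1$, $m=\ell$, $b_r=p-1$. Your boundary checks ($M_1=0$ at $j=2$, and dropping the restriction $u\le j-2t$) are correct details that the paper leaves implicit.

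Your caveat is also correct, and the paper's proof misses it. We have $\nu=\ell$ only when $p\geq 3$. For $p=2$ and $z\geq 2$ the graph is the star $K_{1,2^z-1}$, which has $\reg(R/I)=1\neq\max\{1,\ell\}$. The regularity clause should therefore read $\max\{1,\nu\}$: this equals $\ell$ for odd $p$ and $1$ for $p=2$, and the two agree when $z=1$. The Betti formulas and $\pd=p^z-1$ hold for all $p$.
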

	
	\begin{proof}
		By the standard subgroup count, $\Gamma$ has exactly $\ell=\tfrac{p^z-1}{p-1}$ subgroups of order $p$, each contributing a clique $K_{p-1}$ among the nonidentity vertices, and the identity is adjacent to every vertex. Hence $P(\Gamma)\cong K_1*\ell K_{p-1}$, which is exactly $\CC(1;(\underbrace{p-1,\dots,p-1}_{\ell}) )$; see \cite{ChelvamSattanathan2013,Rather2024}. Applying Theorems~\ref{thm:clique-linear}, \ref{thm:clique-higher}, and \ref{thm:pd-last} with $a=1$ and all $b_r=p-1$ yields the formulas.
	\end{proof}
	
	Finally, we return to the dihedral prime-power case already visible in \cite{Rather2024}.
	
	\medskip The next corollary shows that prime-power dihedral power graphs are pineapple graphs, so their full Betti tables are known.
	
	\begin{corollary}\label{cor:dihedral}
		Let $\Gamma=D_{2n}$ be the dihedral group of order $2n$, and assume $n=p^\alpha$ is a prime power. Then
		$ P(\Gamma)\cong P_n^n, $ and consequently
		$$
		\beta_{j-1,j}(P(\Gamma))
		= (j-1)\binom{n}{j}
		+
		\binom{n}{j-1}
		+
		\sum_{\substack{u+v=j-1\\ u,v\geq 1}}
		\binom{n-1}{u}\binom{n}{v},
		$$
		all higher strands vanish, with 
		$ \reg(R/I(P(\Gamma)))=1,$ and $ \pd(R/I(P(\Gamma)))=2n-1. $
	\end{corollary}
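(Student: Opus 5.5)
The plan is to identify the power graph $P(\Gamma)$ explicitly as a pineapple graph and then specialize Theorem~\ref{thm:pineapple} with $m=r=n$.

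\textbf{Step 1: the rotation part.} Write $D_{2n}=\langle \rho,s \mid \rho^n=s^2=e,\ s\rho s=\rho^{-1}\rangle$. The rotation subgroup $\langle\rho\rangle$ is cyclic of order $n=p^\alpha$. Its subgroups form a chain, since there is exactly one subgroup of each order $p^k$. Given two rotations $x,y$, one of $\langle x\rangle,\langle y\rangle$ therefore contains the other, say $\langle y\rangle\subseteq\langle x\rangle$. Then $y$ is a positive power of $x$. Hence the $n$ rotations, including $e$, induce a clique $K_n$ in $P(\Gamma)$. This is the only place where the prime-power hypothesis is used. I expect it to be the main point needing care, namely the chain-of-subgroups argument and the convention that $e$ is a positive power of every element.

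\textbf{Step 2: the reflections.} The $n$ reflections $s\rho^k$ all have order $2$, so $\langle s\rho^k\rangle=\{e,s\rho^k\}$. A reflection can be a power of another element $g$ only if $s\rho^k\in\langle g\rangle$. Every rotation generates a subgroup of rotations, and every reflection generates a subgroup of order $2$, so this forces $g=s\rho^k$. Conversely, the only nontrivial power of $s\rho^k$ is itself, and its powers otherwise give only $e$. Thus each reflection is adjacent to $e$ and to nothing else. It follows that $P(\Gamma)$ is $K_n$ with $n$ pendant vertices attached to the distinguished vertex $e$, that is, $P(\Gamma)\cong P_n^n$.

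\textbf{Step 3: specialization.} Since $n=p^\alpha\geq 2$, the hypotheses $m\geq 2$ and $r\geq 1$ of Theorem~\ref{thm:pineapple} hold with $m=r=n$. Substituting gives the displayed linear-strand formula:
\begin{itemize}
\item the term $(j-1)\binom{m}{j}$ becomes $(j-1)\binom{n}{j}$;
\item the term $\binom{r}{j-1}$ becomes $\binom{n}{j-1}$;
\item the mixed sum becomes $\sum\binom{n-1}{u}\binom{n}{v}$.
\end{itemize}
The same theorem gives the vanishing of all strands with $j-i\geq 2$, so $\reg(R/I(P(\Gamma)))=1$. It also gives $\pd=m+r-1=2n-1$, witnessed by $\beta_{2n-1,2n}=1$. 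No further computation is needed.
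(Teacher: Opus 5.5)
Your proposal is correct and follows the same route as the paper: you identify $P(D_{2n})$ as the pineapple graph $P_n^n$ and then specialize Theorem~\ref{thm:pineapple} with $m=r=n$. For the identification, the rotations form a clique because the subgroups of a cyclic group of order $p^\alpha$ form a chain, and each reflection is adjacent only to $e$. The only difference is that you verify this structural identification directly, whereas the paper cites earlier work for it and only sketches it.
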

	
	\begin{proof}
		The structural description $P(D_{2n})\cong P_n^n$ in the prime-power case is exactly the pineapple description obtained in \cite{Rather2024}. The cyclic subgroup of rotations forms a clique of size $n$, and the $n$ reflections become leaves attached to the identity. The formulas therefore follow from Theorem~\ref{thm:pineapple}.
	\end{proof}

	\begin{example}\label{ex:group-example}
		For the elementary abelian group of order $9$, Proposition~\ref{cor:elementary} gives a complete Betti-table profile with both linear and higher strands:
		$$
		\beta_{1,2}=12,\ \beta_{2,3}=32,\ \beta_{3,4}=56,\ \beta_{2,4}=6,\ \beta_{3,5}=6,\ \beta_{4,7}=4,\ \beta_{8,9}=1.
		$$
		Thus even this small group graph already displays a genuinely multi-strand resolution.
	\end{example}
	
	\begin{figure}[H]
		\centering
		\begin{tikzpicture}[scale=1, every node/.style={font=\small,circle,draw,minimum size=6mm}]
			\node[fill=gray!15] (v0) at (0,0) {$v_0$};
			\node[fill=gray!15] (c1) at (2,1.5) {$c_1$};
			\node[fill=gray!15] (c2) at (3.2,0.6) {$c_2$};
			\node[fill=gray!15] (c3) at (3.2,-0.6) {$c_3$};
			\node[fill=gray!15] (c4) at (2,-1.5) {$c_4$};
			
			\node[fill=blue!10] (l1) at (-2,1.8) {$\ell_1$};
			\node[fill=blue!10] (l2) at (-3.2,0.6) {$\ell_2$};
			\node[fill=blue!10] (l3) at (-3.2,-0.6) {$\ell_3$};
			\node[fill=blue!10] (l4) at (-2,-1.8) {$\ell_4$};
			
			\draw (v0)--(c1)--(c2)--(c3)--(c4)--(c1);
			\draw (v0)--(c2);
			\draw (v0)--(c3);
			\draw (v0)--(c4);
			\draw (v0)--(l1);
			\draw (v0)--(l2);
			\draw (v0)--(l3);
			\draw (v0)--(l4);
		\end{tikzpicture}
		\caption{A pineapple graph $P_4^4$, isomorphic to the power graph of $D_8$.}
		\label{fig:pineapple}
	\end{figure}
	Figure \ref{fig:pineapple} shows a pineapple graph $P_{4}^{4}$, and Theorem~\ref{thm:pineapple} gives its full Betti table of $P(D_8)$. Table \ref{tab:group-apps} gives some homological invariants of some power graphs of groups, where $E_9\cong \mathbb{Z}_{3}\times \mathbb{Z}_{3}$ denotes the elementary abelian group of order $9$.
	\begin{table}[H]
		\centering
		\caption{Sample applications to group graphs.}
		\label{tab:group-apps}
		\begin{tabular}{@{}llll@{}}
			\toprule
			Graph & Structural model & Selected Betti numbers & Invariants \\ \midrule
			$P(\ZZ_8)$ & $K_8$ & $\beta_{1,2}=28$, $\beta_{7,8}=1$ & $\reg R/I=1$, $\pd=7$ \\
			$P(E_9)$ & $K_1*4K_2$ & $\beta_{1,2}=12$, $\beta_{2,4}=6$, $\beta_{8,9}=1$ & $\reg R/I=4$, $\pd=8$ \\
			$P(D_8)$ & $P_4^4$ & $\beta_{1,2}=10$, $\beta_{3,4}=37$, $\beta_{7,8}=1$ & $\reg R/I=1$, $\pd=7$ \\ \bottomrule
		\end{tabular}
	\end{table}
	
	\medskip The next problem isolates the main case that remains open after the present paper.
	
	\begin{openproblem}\label{op:cyclic-general}
		Determine the complete graded Betti table of $R/I(P(\ZZ_n))$ for arbitrary $n$ when the power graph is not covered by the complete, pineapple, or block-join models considered here.
	\end{openproblem}
	
	 The group applications above both recover and extend earlier results. Corollary \ref{cor:dihedral} recover special formulas from \cite{Rather2024,RatherWang2026}, but Corollary~\ref{cor:elementary} is stronger than the equal-block initial-strand statements in \cite{Rather2024}, since it gives the entire Betti table, regularity, and projective dimension in one stroke.

	\section{Conclusion}\label{sec:conclusion}
	
	This paper closes the main gap left by the recent literature on Betti numbers of split-like algebraic graphs and power graphs of finite groups. The central contribution is a complete computation of the graded Betti numbers of the heterogeneous graph families
	$ \MM(a;\bk{b})=\overline{K}_a * \bigsqcup_{r=1}^m K_{b_r} $ and $\CC(a;\bk{b})=K_a * \bigsqcup_{r=1}^m K_{b_r}, $ with arbitrary block sizes. The method rests on a structural decomposition of the independence complex into a simplex or discrete component together with an iterated join of discrete complexes. Once this decomposition is identified, Hochster's formula yields exact coefficient-extraction formulas for every graded Betti number. From these formulas we derived closed expressions for the linear strand, every higher strand, the Hilbert series, the regularity, the projective dimension, and two natural classes of extremal Betti numbers.
	
	The results recover the initial equal-block formulas from \cite{Rather2024} but go substantially further by determining the complete Betti table and the regularity. The paper also clarifies the role of the number $\nu$ of nontrivial blocks, as it is exactly the quotient regularity unless $\nu=0$, in which case the regularity is $1$. This leads to a sharp $2$-linear criterion for the edge ideal. In addition, the top regularity corner and the last-column corner are identified explicitly, making the global shape of the Betti table transparent.
	
	The group-theoretic applications show that the formulas are not merely abstract generalizations. Pineapple graphs, cyclic prime-power power graphs, elementary abelian power graphs, and prime-power dihedral power graphs all fit naturally into the framework. In particular, the present work gives complete Betti tables for classes for which only partial formulas or initial strands were previously available.
	
	There are also limitations. The power graphs of arbitrary cyclic groups, and more generally of arbitrary finite groups, need not decompose into the block-join patterns studied here. For those graphs the subgroup lattice may force more complicated simplicial topology and genuinely new homological phenomena. Thus the main open direction is to identify additional structural templates beyond the split--join families treated in this paper. A second direction is to investigate multigraded refinements, Alexander dual interpretations, and asymptotic behavior under iterated joins or subgroup-growth operations. A third direction is to extend the present coefficient-extraction method to other algebraic graph classes, especially commuting and comaximal graphs whose block structures suggest partial compatibility with the methods developed here.
	
	\section*{Declarations}
	\noindent \textbf{Data Availability:} There is no data associated with this article.
	
	\noindent \textbf{Funding:} The authors did not receive support from any organization for the submitted work.
	
	\noindent \textbf{Conflict of interest:} The authors have no competing interests to declare that are relevant to the content of this article.
	
	\noindent \textbf{Acknowledgement:} We utilized online TeX tools to generate the TikZ code for the figures.
	
	\noindent\textbf{Note:} For any comments and suggestions regarding this article, please feel free to contact at \href{mailto:bilalahmadrr@gmail.com}{bilalahmadrr@gmail.com}.
	

\end{document}